\newcommand{\LC}{\left(}
\newcommand{\RC}{\right)}
\theoremstyle{plain}
\newtheorem{thm}{Theorem}[section]
\newtheorem{prop}{Proposition}[section]
\newtheorem{rmk}[prop]{Remark}
\numberwithin{equation}{section}
\newcommand {\R} {\mathbb{R}} 
 \newcommand {\N} {\mathbb{N}}
\newcommand {\p} {\partial}
\newcommand{\eps}{\epsilon}
\newcommand{\vareps}{\varepsilon}
\newcommand{\wt}{\widetilde}
\newcommand{\norm}[1]{\lVert #1 \rVert}         
\DeclareMathOperator{\F} {\mathcal{F}}
 \definecolor{skyblue}{rgb}{0.85,0.85,1}
\author[]{Ru-Yu Lai}
\address{School of Mathematics, University of Minnesota, Minneapolis, MN 55455, USA}
\curraddr{}
\email{rylai@umn.edu}
\author[]{Yi-Hsuan Lin}
\address{Department of Applied Mathematis, National Chiao Tung University, Hsinchu 30050, Taiwan}
\curraddr{}
\email{yihsuanlin3@gmail.com}
\title[Inverse problems for fractional semilinear elliptic equations]{Inverse problems for fractional semilinear elliptic equations}
\begin{document}

\maketitle
\begin{abstract}
This paper is concerned with the forward and inverse problems for the fractional semilinear elliptic equation $(-\Delta)^s u +a(x,u)=0$ for $0<s<1$. 
For the forward problem, we proved the problem is well-posed and has a unique solution for small exterior data. The inverse problems we consider here consists of two cases. First we demonstrate that an unknown coefficient $a(x,u)$ can be uniquely determined from the knowledge of exterior measurements, known as the Dirichlet-to-Neumann map. 
Second, despite the presence of an unknown obstacle in the media, we show that the obstacle and the coefficient can be recovered concurrently from these measurements. Finally, we investigate that these two fractional inverse problems can also be solved by using a single measurement, and all results hold for any dimension $n\geq 1$.

	\medskip

 \noindent{\bf Keywords.}  Calder\'on problem, Dirichlet-to-Neumann map, semilinear elliptic equations, fractional Laplacian, higher order linearization, maximum principle, Runge approximation, single measurement


\end{abstract}

 	\tableofcontents


\section{Introduction}
In this paper, we study inverse problems for fractional semilinear elliptic equations, which extends our earlier result \cite{lai2019global}. For $0<s<1$, let $\Omega \subset \R^n, n\geq 1$ be a connected bounded domain with $C^{1,1}$ boundary $\p \Omega$, and  $\Omega_e:=\R^n\setminus \overline{\Omega}$ be the exterior domain of $\Omega$. We study the inverse problem for the fractional semilinear elliptic equation:
\begin{align}\label{intro_eqn_S}
\begin{cases}
(-\Delta)^s u+ a(x,u) =0 & \hbox{ in } \Omega,\\
u=f  &  \hbox{ in } \Omega_e,\\
\end{cases}
\end{align}
where $a(x,u)$ is an unknown coefficient to be determined from the given information. We will characterize the regularity assumptions for $a(x,u)$ later.
Recall that the fractional Laplacian for $0<s<1$ is defined by 
\begin{align}\label{fractional Laplacian}
(-\Delta)^{s}u=c_{n,s}\mathrm{P.V.}\int_{\mathbb{R}^{n}}\dfrac{u(x)-u(y)}{|x-y|^{n+2s}}dy,
\end{align}
for $u\in H^s(\mathbb R^n)$, where P.V. denotes the principal value and  
\begin{equation}
c_{n,s}=\frac{\Gamma(\frac{n}{2}+s)}{|\Gamma(-s)|}\frac{4^{s}}{\pi^{n/2}}\label{c(n,s) constant}
\end{equation}
is a constant that was explicitly calculated in \cite{di2012hitchhiks}. Here $H^s(\R^n)$ is the standard fractional Sobolev space, which will be defined in Section \ref{Sec 2}.

For the coefficients $a(x,u)$, we assume that $a=a(x,z):\Omega\times \R \to \R$ satisfies 
\begin{align}\label{condition a}
\begin{cases}
a(x,0)=0,  \quad \p_z a(x,0)\geq 0, \\
\text{the map } \R \ni  z \mapsto  a(\cdot, z) \text{ is holomorphic with values in }C^s(\overline\Omega),
\end{cases}
\end{align}
where $C^s(\overline\Omega)$ stands for the H\"older space, which will be defined in Section \ref{Sec 2}.
In addition, it follows from \eqref{condition a} that $a$ can be expanded as a Taylor series 
\begin{align*}
a(x,z) = \displaystyle\sum^\infty_{k=1} a_k(x){z^k\over k!}, \qquad  a_k(x):=\p^k_z a(x,0)\in C^s(\overline\Omega),
\end{align*}
converging in $C^s(\Omega\times\R)$ space. Let us emphasize that the condition $\p_za(x,0)\geq 0$ in \eqref{condition a} plays an essential role to study related fractional inverse problems here, see for example Remarks~\ref{remark 2.2}.

The purpose of the paper consists of three main aspects. First, it attempts to prove the well-posed result for \eqref{intro_eqn_S} when small exterior data is imposed. Even though in \cite{lai2019global} the existence of the solution to \eqref{intro_eqn_S} is valid under certain conditions, but there is no guarantee for the uniqueness of the solution. 
With the establishment of well-posedness of \eqref{intro_eqn_S}, the second aspect is to study the inverse coefficient problem for the semilinear fractional equation. Thanks to the \emph{higher order linearization} technique, the Dirichlet and Neumann data can be measured on different open sets $W_1$ and $W_2$ in $\Omega_e$ (as indicated in Theorem~\ref{Main Thm 1}).
However, in \cite{lai2019global} where only first order linearization is performed, the condition $W_1=W_2\subset \Omega_e$ is crucial and inevitable for the study of the related inverse problem.  
Finally, in this paper, we also investigate simultaneous reconstruction of the unknown obstacle and unknown coefficients for the semilinear fractional equation, which is based on the higher order linearization approach as well.



We will prove the well-posedness of \eqref{intro_eqn_S} in Section \ref{Sec 2} that for any $f\in C^\infty_c(\Omega_e)$, whenever $\|f\|_{C^\infty_c(\Omega_e)}$ is sufficiently small, then \eqref{intro_eqn_S} is well-posed. It is worth to emphasizing that our regularity assumptions on the $C^{1,1}$ boundary is needed to prove the \emph{well-posedness} (see Section \ref{Sec 2}). 
 Under the well-posedness for the fractional semilinear elliptic equation \eqref{intro_eqn_S}, one can define the \emph{Dirichlet-to-Neumann} (DN) map via the bilinear form corresponding to the exterior problem \eqref{intro_eqn_S}, 
\[
\Lambda_{a}: H^s(\Omega_e)\to \big(H^{s}( \Omega_e)\big)^\ast, \qquad \Lambda_{a}(f)=\left.(-\Delta)^s u_f\right|_{\Omega_e}, 
\]
which maps $f$ to a nonlocal analogue of the Neumann boundary value of the solution $u_f$ to \eqref{intro_eqn_S}. Here $\big(H^{s}( \Omega_e)\big)^\ast$ is the dual space of $H^{s}( \Omega_e)$.

In the following two subsections, we first introduce the \emph{global uniqueness} in Section~\ref{subsec:uniqueness}, and then present the \emph{inverse obstacle problem} in Section~\ref{subsec:obstacle}, where there is an unknown obstacle encoded in the domain.  One of the main materials to prove of these theorems is the \emph{strong maximum principle}, which will be shown in Section \ref{Sec 2}.

\subsection{Global uniqueness}\label{subsec:uniqueness}
We investigate a fractional analogue of the Calder\'on problem for the fractional semilinear elliptic equation in any dimension. The main objection is to uniquely identify the unknown potential from the measurable data. In particular, due to the nonlocal nature, the inverse problem can be solved from \emph{partial data}, where only exterior Dirichlet and Neumann measurements in arbitrary open sets in $\Omega_e$ are needed. 

The fractional Calder\'on problem was first studied in the work \cite{ghosh2016calder}, for the linear case $a(x,u)=q(x)u$. Specifically, the authors in \cite{ghosh2016calder} showed that the unknown bounded potential $q$ is uniquely determined by the corresponding (nonlocal) DN map. Furthermore, there are extensive studies regarding fractional inverse problems, we refer readers to \cite{BGU18,CLL2017simultaneously,cekic2020calderon,ghosh2017calder,harrach2017nonlocal-monotonicity,harrach2020monotonicity,LLR2019calder,RS17,GRSU18,Ru17quantitative} for various settings with detailed discussions.

Let us state our first main result of the paper, which can be regarded as a nonlocal type Calder\'on inverse problem with partial data.

\begin{thm}\label{Main Thm 1}
	Let $\Omega \subset \R^n$, $n\geq 1$ be a bounded domain with $C^{1,1}$ boundary, and let $W_1,W_2\Subset \Omega_e$ be arbitrarily open subsets. Let $a_j(x,z)$ satisfy the condition \eqref{condition a} for $j=1,2$ and $0<s<1$. If the DN maps of the semilinear elliptic equations $(-\Delta)^s u + a_j (x,u)=0$ in $\Omega$ satisfy 
	\begin{align}\label{DN map in Thm 1}
	  \left.	\Lambda_{a_1}(f) \right|_{W_2} =  \left.	\Lambda_{a_2}(f) \right|_{W_2} \qquad \text{ for any }f\in C^\infty_c(W_1),
	\end{align}
	with $\norm{f}_{C^\infty_c(W_1)}<\delta$ for sufficiently small number $\delta>0$,
	then $$ a_1(x,z)=a_2(x,z) \qquad\hbox{ in } \Omega \times \R.$$
	\end{thm}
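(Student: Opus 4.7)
\medskip
\noindent\textbf{Proof proposal for Theorem~\ref{Main Thm 1}.}

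My plan is to recover the full Taylor series of $a(x,z)$ at $z=0$ one coefficient at a time, by combining the \emph{higher order linearization} technique with the fractional Runge approximation. Fix $f_1,\dots,f_N\in C^\infty_c(W_1)$ and small parameters $\epsilon=(\epsilon_1,\dots,\epsilon_N)\in\R^N$ and consider the exterior datum $f_\epsilon=\sum_{j=1}^N\epsilon_j f_j$. By the well-posedness established in Section~\ref{Sec 2} together with the holomorphic dependence of $a(x,z)$ on $z$, the solution $u_\epsilon$ of \eqref{intro_eqn_S} depends analytically on $\epsilon$ near $0$, so all mixed derivatives $\partial_{\epsilon_1}\!\cdots\partial_{\epsilon_N}u_\epsilon|_{\epsilon=0}$ exist and are well-defined. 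Differentiating the equation once yields that $v_j:=\partial_{\epsilon_j}u_\epsilon|_{\epsilon=0}$ solves the linear exterior problem $(-\Delta)^s v_j + q(x)v_j=0$ in $\Omega$ with $v_j=f_j$ in $\Omega_e$, where $q(x):=\partial_z a(x,0)\geq 0$ is the first Taylor coefficient. The assumption $\Lambda_{a_1}=\Lambda_{a_2}$ on $W_2$ then translates into equality of the associated \emph{linear} DN maps on $W_1\times W_2$, so the Ghosh--Salo--Uhlmann fractional Calder\'on uniqueness result yields $\partial_z a_1(x,0)=\partial_z a_2(x,0)$ and denotes this common value $q(x)$.

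Assuming inductively that $\partial_z^k a_1(x,0)=\partial_z^k a_2(x,0)$ for all $k\leq N-1$, the plan is to extract the $N$-th Taylor coefficient by applying $\partial_{\epsilon_1}\!\cdots\partial_{\epsilon_N}$ to the equation $(-\Delta)^s u_\epsilon+a(x,u_\epsilon)=0$ and evaluating at $\epsilon=0$. Using the Faà di Bruno formula and the induction hypothesis, the mixed derivative $w:=\partial_{\epsilon_1}\!\cdots\partial_{\epsilon_N}u_\epsilon|_{\epsilon=0}$ solves a linear equation
\[
(-\Delta)^s w + q(x)w = -\bigl(\partial_z^N a(x,0)\bigr)\, v_1 v_2\cdots v_N \quad\text{in }\Omega,\qquad w\big|_{\Omega_e}=0,
\]
with the lower-order contributions cancelling between the two equations. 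Pairing the identity $\Lambda_{a_1}(f_\epsilon)|_{W_2}=\Lambda_{a_2}(f_\epsilon)|_{W_2}$ with a test function $f_0\in C^\infty_c(W_2)$, differentiating $N$ times in $\epsilon$, and using the symmetry of the bilinear form associated with the self-adjoint operator $(-\Delta)^s+q$, the resulting integral identity becomes
\[
\int_\Omega \bigl(\partial_z^N a_1(x,0)-\partial_z^N a_2(x,0)\bigr)\,v_1(x)v_2(x)\cdots v_N(x)\,v_0(x)\,dx = 0,
\]
valid for every collection of linearized solutions $v_j$ with $v_j|_{\Omega_e}\in C^\infty_c(W_1)$ and $v_0|_{\Omega_e}\in C^\infty_c(W_2)$.

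The final step is a density argument: since $q\geq 0$, the operator $(-\Delta)^s+q$ is coercive on $\wt{H}^s(\Omega)$, and the fractional Runge approximation (valid on \emph{any} nonempty open subset of $\Omega_e$, a consequence of the strong unique continuation of $(-\Delta)^s$) tells us that the linear span of restrictions $v|_\Omega$ of such solutions is dense in $L^2(\Omega)$. Applying Runge once to $v_0$ upgrades the integral identity to $(\partial_z^N a_1(\cdot,0)-\partial_z^N a_2(\cdot,0))\,v_1\cdots v_N=0$ a.e.\ in $\Omega$ for all admissible $v_j$; iterating, or approximating the constant function $1$ in $L^2(\Omega)$ by the $v_j$'s and invoking $L^\infty$ regularity of the linearized solutions (to pass from $L^2$-density to an honest limit in the product), one concludes $\partial_z^N a_1(x,0)=\partial_z^N a_2(x,0)$. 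Induction then gives equality of every Taylor coefficient, and the holomorphy assumption \eqref{condition a} upgrades this pointwise-in-$z$ equality to $a_1\equiv a_2$ on $\Omega\times\R$. The main obstacle I anticipate is the density step: making sense of the product $v_1\cdots v_N v_0$ as an approximation of an arbitrary test function in a single topology requires simultaneously controlling each factor in both $L^2(\Omega)$ and $L^\infty(\Omega)$, which in turn relies on regularity estimates for solutions of the fractional Schr\"odinger equation with $C^s$ potentials, together with a careful bookkeeping of the iterated Runge approximation.
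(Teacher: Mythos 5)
Your overall strategy is sound, but it follows a genuinely different route from the paper's, and one step as you describe it is shaky. The paper never forms an integral identity for this theorem: it exploits the strong uniqueness property of $(-\Delta)^s$ (Proposition~\ref{Prop:strong uniqueness}) to show directly that \emph{all} linearized solutions coincide, $u_1^{(k)}=u_2^{(k)}$ in $\R^n$ for every $k$ (Proposition~\ref{prop:induction}) --- note this needs no induction hypothesis on the coefficients, since the difference $v=u_1^{(k)}-u_2^{(k)}$ satisfies $v=0$ in $\Omega_e$ and $(-\Delta)^s v=0$ in $W_2$, hence $v\equiv 0$. Subtracting the $k$-th linearized equations then gives the \emph{pointwise} identity $\bigl(\p_z^k a_1(x,0)-\p_z^k a_2(x,0)\bigr)\bigl(u^{(1)}\bigr)^k=0$ in $\Omega$, and the conclusion follows from the strong maximum principle (Proposition~\ref{Prop: strong max principle}): choosing $f\geq 0$, $f\not\equiv 0$, forces $u^{(1)}>0$ in $\Omega$, so the product never vanishes. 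Your route --- Alessandrini-type integral identity tested against an adjoint solution $v_0$ from $W_2$, followed by Runge approximation --- is instead what the paper uses for the obstacle problem in Section~\ref{Sec 4}; it is correct and buys partial-data flexibility, at the cost of the extra approximation machinery.

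The soft spot is your final density step. A single application of the Runge approximation in the $v_0$ slot (taking $g=h:=(\p_z^N a_1(\cdot,0)-\p_z^N a_2(\cdot,0))v_1\cdots v_N\in L^\infty(\Omega)\subset L^2(\Omega)$) legitimately yields $h=0$ a.e.\ for each fixed admissible $v_1,\dots,v_N$. But your proposal to then ``iterate Runge'' or approximate the constant $1$ by the $v_j$'s does not go through as stated: Proposition~\ref{Prop Runge-approximation-property} only gives $L^2(\Omega)$ convergence, and products of $L^2$-convergent sequences need not converge, nor do the approximants converge in $L^\infty$. The repair is exactly the paper's Proposition~\ref{Prop: Density of the products of fractional Laplacian}: you do not need density of the products at all --- one strictly positive choice of $v_1,\dots,v_N$ suffices, and the strong maximum principle (available precisely because $q=\p_z a(x,0)\geq 0$ by \eqref{condition a}) provides it. You should also record explicitly that the lower-order terms in the Fa\`a di Bruno expansion cancel because the lower-order mixed derivatives of $u_{1,\epsilon}$ and $u_{2,\epsilon}$ agree (by the induction hypothesis and uniqueness for the linear problem, or directly by the strong uniqueness argument above); equality of the Taylor coefficients alone is not quite enough. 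Your first-order step via the Ghosh--S\'alo--Uhlmann linear uniqueness result is a valid alternative to the paper's treatment of $\beta=1$.
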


The proof of Theorem~\ref{Main Thm 1} is mainly based on the higher order linearization scheme. The idea of this method is to differentiate the original nonlinear equation with respect to small parameters multiple times in order to get a simpler linear equation.
The nonlinearity in some situations could in fact be beneficial to solve the nonlinear analogues of some unsolved inverse problems for linear equations. 	
In \cite{KLU2018}, the method of utilizing multiple linearization was introduced for nonlinear hyperbolic equations, see also \cite{CLOP,LUW2018}.
For the  semilinear elliptic equation, the uniqueness result with full data was proved in \cite{FO19,LLLS2019nonlinear,LLLS2019partial}, and was also relied on the higher order linearization.
Moreover, in the partial data setting, the same result was investigated by \cite{KU2019remark,LLLS2019partial}
under the condition of the coefficient $a(x,z)$, that is, $\p_z a(x,0)=0$. This constraint is particularly crucial there since it enables the use of the density result shown in \cite{ferreira2009linearized} that the set of the products of harmonic functions which vanish on a closed proper subset of the boundary is dense in $L^1(\Omega)$. An extension of this density result, where the products of harmonic functions are replaced by the scalar products of gradients of such functions, was shown in \cite{KU2019partial} recently.

In addition to the higher order linearization scheme, the proof of Theorem~\ref{Main Thm 1} also relies on its nonlocal analogue of density result for fractional equation. In particular, we show that the set of products of solutions to the fractional Schr\"odinger equation is dense in $L^\infty(\Omega)$, see Section~\ref{Sec 3} for detailed discussions of this density result.


\subsection{Inverse obstacle problem}\label{subsec:obstacle}
Inspired by the works \cite{CLL2017simultaneously,LLLS2019partial, KU2019partial}, we consider inverse problems for fractional semilinear elliptic equations in the presence of an unknown obstacle inside the media.

Let us introduce an inverse obstacle problem for fractional semilinear elliptic equations. Let $\Omega$ and $D$ be a bounded open sets with  $C^{1,1}$ boundaries $\p \Omega$ and $\p D$ such that $D\Subset\Omega$. Assume that $\p \Omega$ and $\Omega \setminus \overline{D}$ are connected. Let $a(x,z)$ satisfy \eqref{condition a} for $x\in \Omega \setminus \overline{D}$ and $z\in\R$. Consider the following fractional semilinear elliptic equation 
\begin{align}\label{main equation_cavity}
\begin{cases}
(-\Delta)^s u + a(x,u) =0 & \text{ in }\Omega \setminus \overline{D},\\
u =0 &  \text{ in } D, \\
u =f & \text{ in } \Omega_e.
\end{cases}
\end{align}
Here $f\in C^\infty_c(\Omega_e)$ with $\norm{f}_{C^\infty_c(\Omega_e)}< \delta$, where $\delta>0$ is sufficiently small. The well-posedness of \eqref{main equation_cavity} for small solutions will be shown in Section \ref{Sec 2} when the exterior data is sufficiently small, as a result, one can also define the corresponding DN map $\Lambda_{a}^D$, Neumann values $(-\Delta)^s u_f$ measured only in $\Omega_e$, by 
\[
\Lambda_{a}^D : H^{s}(\Omega_e) \to \big(H^{s}( \Omega_e)\big)^\ast,  \quad \Lambda_{a}^D: f \mapsto (-\Delta)^su_f |_{ \Omega_e},
\] 
where $u_f$ is the unique solution to \eqref{main equation_cavity}.
The (partial data) inverse obstacle problem is to determine the unknown obstacle $D$ and the coefficient $a$ simultaneously from the DN map $\Lambda_{a}^D$. Our second main result is as follows.

\begin{thm}[Simultaneous recovery: Unknown obstacle and coefficients]\label{Thm: Nonlinear nonlocal Schiffer's problem}
	Let $\Omega \subset \R^n$ be a bounded domain with connected $C^{1,1}$ boundary $\p \Omega$, $n\geq 1$ and $0<s<1$. Let $D_1, D_2\Subset \Omega$ be nonempty open subsets with $C^{1,1}$ boundaries such that $\Omega \setminus \overline{D_j}$ are connected. For $j=1,2$, let $a_j=a_j(x,z)$ satisfy \eqref{condition a} for $x\in \Omega\setminus \overline{D_j}$. Let $\Lambda_{a_j}^{D_j}$ be the DN maps of the following Dirichlet problems 
	\begin{align*}
	\begin{cases}
	(-\Delta)^s u_j +a_j (x,u_j)=0 & \text{ in }\Omega \setminus \overline{D_j}, \\
	u_j =0 & \text{ in } D_j,\\
	u_j =f & \text{ in } \Omega_e,
	\end{cases}
	\end{align*}
	with respect to the unique (small) solution $u_j$ for sufficiently small exterior data $f\in C^\infty_c(\Omega_e)$.
	Let $W_1,W_2 \Subset \Omega_e$ be arbitrarily open subsets.
    If
	\begin{align}\label{DN map in Thm 2}
	\left.\Lambda_{a_1}^{D_1}(f)\right|_{W_2}= \left.\Lambda_{a_2}^{D_2}(f) \right|_{W_2}  \qquad \text{ for any }f\in C^\infty_c(W_1),
	\end{align}
	with $\norm{f}_{C^\infty_c(W_1)}<\delta$, for sufficiently small number $\delta>0$,
	then 
	\[
	D:=D_1 = D_2 \quad \text{ and } \quad a_1(x,z)=a_2(x,z) \qquad \text{ in }(\Omega\setminus \overline{D})\times \R.
	\]
\end{thm}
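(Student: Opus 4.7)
First I would run a higher-order linearization parallel to the one for Theorem~\ref{Main Thm 1}, but extract the obstacle already from the \emph{first} linearization and then, with $D_1=D_2$ in hand, iterate to recover the Taylor coefficients of $a$. Taking data $f=\sum_{i=1}^N\eps_i f_i$ with $f_i\in C_c^\infty(W_1)$ and $|\eps_i|$ small enough for the well-posedness of Section~\ref{Sec 2}, the first derivatives $v_j^{(i)}:=\p_{\eps_i}u_j|_{\eps=0}$ solve
\[
(-\Delta)^s v_j^{(i)}+q_j v_j^{(i)}=0 \text{ in }\Omega\setminus\overline{D_j},\qquad v_j^{(i)}=0 \text{ in }D_j,\qquad v_j^{(i)}=f_i \text{ in }\Omega_e,
\]
with $q_j:=\p_z a_j(\cdot,0)\geq 0$. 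Differentiating \eqref{DN map in Thm 2} at $\eps=0$, the difference $w:=v_1^{(i)}-v_2^{(i)}$ satisfies $w\equiv 0$ in $\Omega_e$ and $(-\Delta)^s w\equiv 0$ in $W_2\subset\Omega_e$, so the fractional unique continuation principle (UCP) forces $v_1^{(i)}\equiv v_2^{(i)}$ throughout $\R^n$.

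Next I would combine this global identity with the Dirichlet condition to rule out $D_1\neq D_2$. If, after relabeling, there is $x_0\in D_1\setminus\overline{D_2}$, pick a small ball $U\Subset D_1\setminus\overline{D_2}$ containing $x_0$; on $U$ we have $v_1^{(i)}=0$ (by the Dirichlet condition in $D_1$), hence also $v_2^{(i)}=0$, while simultaneously $U\subset\Omega\setminus\overline{D_2}$ forces $(-\Delta)^s v_2^{(i)}=-q_2 v_2^{(i)}=0$ on $U$. A second application of the fractional UCP then yields $v_2^{(i)}\equiv 0$ on $\R^n$, contradicting $v_2^{(i)}|_{\Omega_e}=f_i\not\equiv 0$. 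Hence $D:=D_1=D_2$, and the partial-data fractional Calder\'on uniqueness of \cite{ghosh2016calder} applied to the equal linearized DN data on $\Omega\setminus\overline{D}$ gives $q_1=q_2=:q$.

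With the obstacle and the leading potential fixed, I would transplant the proof of Theorem~\ref{Main Thm 1} to the domain $\Omega\setminus\overline{D}$ (exterior region $D\cup\Omega_e$, data in $W_1$, measurement in $W_2$): differentiating \eqref{DN map in Thm 2} to order $N$ and using the induction hypothesis $\p_z^k a_1=\p_z^k a_2$ for $k<N$ produces the integral identity
\[
\int_{\Omega\setminus\overline{D}}\bigl(\p_z^N a_1-\p_z^N a_2\bigr)(x,0)\, v^{(1)}(x)\cdots v^{(N)}(x)\,dx=0
\]
for arbitrary solutions $v^{(i)}$ of $(-\Delta)^s v+qv=0$ in $\Omega\setminus\overline{D}$ with exterior data supported in $W_1$. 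The density of such products in $L^\infty(\Omega\setminus\overline{D})^{\ast}$, established in Section~\ref{Sec 3}, then closes the induction, yielding $\p_z^k a_1=\p_z^k a_2$ for every $k\geq 1$, and the holomorphy assumption \eqref{condition a} concludes $a_1\equiv a_2$ on $(\Omega\setminus\overline{D})\times\R$. The main hurdle I expect is Step~2: converting the \emph{local} obstacle mismatch into a \emph{global} exterior triviality of a single linearized solution, which is only possible because the nonlocal UCP propagates information from the tiny open set $U$ to all of $\R^n$; once $D$ is recovered the coefficient step is a direct transfer of Theorem~\ref{Main Thm 1}, although one should verify that the Runge-type density argument of Section~\ref{Sec 3} carries over to the connected $C^{1,1}$ annular region $\Omega\setminus\overline{D}$.
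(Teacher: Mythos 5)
Your proposal is correct in substance. Step 1 (obstacle recovery) is exactly the paper's argument: first linearization, unique continuation from $W_2$ to get $v_1^{(i)}\equiv v_2^{(i)}$ in $\R^n$, then the contradiction on a small open set in the symmetric difference of $D_1,D_2$ using that the solution and its fractional Laplacian both vanish there. For Step 2 the paper deliberately does \emph{not} transplant Theorem~\ref{Main Thm 1} directly: it introduces an auxiliary positive solution $v^{(0)}$ with exterior data $\eta\in C^\infty_c(W_2)$, $\eta\geq 0$, pairs the subtracted $(N+1)$-st linearized equation against $v^{(0)}$, and uses the Runge approximation of \cite{CLL2017simultaneously} in $\Omega\setminus\overline{D}$ plus the strong maximum principle to kill the Taylor coefficient difference; the role of $v^{(0)}$ (vanishing in $D$) is precisely to compensate for the fact that the DN map is measured only in $\Omega_e$ and not in $D$. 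Your route instead leans on the global identity $u_1^{(N)}=u_2^{(N)}$ in $\R^n$ (the obstacle analogue of Proposition~\ref{prop:induction}, which does go through since $u_j^{(N)}=0$ in $D\cup\Omega_e$ for $N\geq 2$ and the strong uniqueness applies), from which one gets the \emph{pointwise} identity $\bigl(\p_z^N a_1-\p_z^N a_2\bigr)(x,0)\,v^{(1)}\cdots v^{(N)}=0$ in $\Omega\setminus\overline{D}$ and concludes by positivity (Proposition~\ref{Prop: Density of the products of fractional Laplacian}); the paper explicitly acknowledges this alternative before choosing the Runge route. One caution: the integral identity as you wrote it, with all factors having data in $W_1$ and no test solution attached to $W_2$, cannot be obtained by pairing the DN maps directly (that pairing necessarily produces the extra factor $v^{(0)}$ with data in $W_2$, or an uncontrolled contribution from $D$); it is only justified as a weakening of the pointwise identity, so you should make the appeal to the UCP-based equality of the $N$-th linearizations explicit. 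Also, for the base case $\beta=1$ the relevant partial-data uniqueness in the annular region with measurements only in $\Omega_e$ is the obstacle version from \cite{CLL2017simultaneously}, not the plain result of \cite{ghosh2016calder}.
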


The proof of Theorem \ref{Thm: Nonlinear nonlocal Schiffer's problem} is also based on higher order linearizations and the density property for solutions of the fractional Laplacian. Note that for the linear case when $a(x,u)=q(x)u$ in \eqref{main equation_cavity}, similar results were investigated by \cite{CLL2017simultaneously}. 

The paper is organized as follows. In Section \ref{Sec 2} we prove the well-posedness for the fractional semilinear elliptic equation $(-\Delta)^s u + a(x,u)=0$ in $\Omega$ with sufficiently regular exterior data $f$, in an appropriate sense. Equipped with the well-posedness, we define the DN map via the energy integration associated with the equation. 
In Section \ref{Sec 3}, we show that the set of the products of solutions to the fractional Schr\"odinger equation is dense in $L^\infty(\Omega)$, and we then carry out Theorem \ref{Main Thm 1} by applying the higher order linearization. Next we prove Theorem \ref{Thm: Nonlinear nonlocal Schiffer's problem} in Section \ref{Sec 4}. Finally, with a single measurement, one can also determine the coefficient and the obstacle in Section~\ref{Sec 5}.

\section{Preliminaries}\label{Sec 2}

In this section, we introduce notations and well-posedness of the fractional semilinear elliptic equation \eqref{intro_eqn_S}.

\subsection{Function spaces}

Let us first recall the H\"older spaces. Let $U\subset\R^n$ be an open set, for a given $0<\alpha <1$, the space $C^{k,\alpha}(U)$ is defined by
\[
C^{k,\alpha}(U):=\left\{f:U\to \R:\ \norm{f}_{C^{k,\alpha}(U)}<\infty \right\},
\]
where 
\[
\norm{f}_{C^{k,\alpha}(U)}:=\sum_{|\beta|\leq k}\norm{\p ^\beta f}_{L^\infty(U)}+\sup_{x\neq y, \ x,y\in U}\sum_{|\beta|=k}\frac{|\p ^\beta f(x)-\p^\beta f(y)|}{|x-y|^\alpha}.
\]
Here $\beta=(\beta_1,\cdots,\beta_n)$ is a multi-index with $\beta_i \in \N \cup \{0\}$ and $|\beta|=\beta_1 +\cdots +\beta_n$.  
Furthermore, we also denote the space
\[
C_0^{k,\alpha}(U):=\text{closure of }C^\infty_c(U) \text{ in }C^{k,\alpha}(U),
\]
where $k\in \N\cup \{0\}$ and $0<\alpha <1$. When $k=0$, we simply denote $C^\alpha(U) \equiv C^{0,\alpha}(U)$.

We next define the fractional Sobolev space. For $0<s<1$, the fractional Sobolev space is $H^{s}(\mathbb{R}^{n}):=W^{s,2}(\mathbb{R}^{n})$,
which is the $L^{2}$ based Sobolev space with the norm 
\begin{equation}\notag
\|u\|_{H^{s}(\mathbb{R}^{n})}^{2}\\=\|u\|_{L^{2}(\mathbb{R}^{n})}^{2}+\|(-\Delta)^{s/2}u\|_{L^{2}(\mathbb{R}^{n})}^{2}.\label{eq:H^s norm}
\end{equation}
Moreover, by using the Parseval identity, the semi-norm $\|(-\Delta)^{s/2}u\|_{L^{2}(\mathbb{R}^{n})}^{2}$
can also be expressed as 
\[
\|(-\Delta)^{s/2}u\|_{L^{2}(\mathbb{R}^{n})}^{2}=\left((-\Delta)^{s}u,u\right)_{\mathbb{R}^{n}},
\]
where $(-\Delta)^s $ is the fractional Laplacian \eqref{fractional Laplacian}.

Next, let $U\subset \R^n $ be an open set and $a\in\mathbb{R}$,
we consider the following Sobolev spaces, 
\begin{align*}
H^{a}(U) & :=\left\{u|_{U}:\, u\in H^{a}(\mathbb{R}^{n})\right\},\\
\widetilde{H}^{a}(U) & :=\text{closure of \ensuremath{C_{c}^{\infty}(U)} in \ensuremath{H^{a}(\mathbb{R}^{n})}},\\
H_{0}^{a}(U) & :=\text{closure of \ensuremath{C_{c}^{\infty}(U)} in \ensuremath{H^{a}(U)}},
\end{align*}
and 
\[
H_{\overline{U}}^{a}:=\left\{u\in H^{a}(\mathbb{R}^{n}):\,\mathrm{supp}(u)\subset\overline{U}\right\}.
\]
The Sobolev space $H^{a}(U)$ is complete under the graph norm
\[
\|u\|_{H^{a}(U)}:=\inf\left\{ \|v\|_{H^{a}(\mathbb{R}^{n})}:\,v\in H^{a}(\mathbb{R}^{n})\mbox{ and }v|_{U}=u\right\} .
\]
It is known that $\widetilde{H}^{a}(U)\subsetneq H_{0}^{a}(U)$,
and $H_{\overline{U}}^{a}$ is a closed subspace of $H^{a}(\mathbb{R}^{n})$.
In addition, when $U$ is a Lipschitz domain, it is also known that the dual spaces can be expressed as 
\begin{align*}
	(H^s_{\overline{U}}(\R^n))^\ast = H^{-s}(U), \quad \text{ and }\quad (H^s(U))^\ast=H^{-s}_{\overline U}(\R^n).
\end{align*}
For more details of the fractional Sobolev spaces, we
refer to \cite{di2012hitchhiks,mclean2000strongly}.

\subsection{Well-posedness} 
Let $\Omega\subset \R^n$ be a bounded domain with $C^{1,1}$ boundary $\p \Omega$, $n\geq 1$, and $0<s<1$. We  consider the Dirichlet problem with exterior data 
\begin{align}\label{eqn:S}
\begin{cases}
 (-\Delta)^s u + a(x,u) =0 & \hbox{ in } \Omega,\\
u=f  &  \hbox{ in } \Omega_e,
\end{cases}
\end{align}
where $f\in C^\infty_c(\Omega_e)$ for some $0<s <1$.

We first recall the $L^\infty$-estimate for the solution to the fractional Schr\"odinger equation, which has been derived in \cite[Proposition 3.3]{lai2019global}.

\begin{prop}\label{Prop: L infty bound}
	Let $\Omega\subset \R^n$, $n\geq 1$ be a bounded domain with Lipschitz boundary $\p\Omega$, and $0<s<1$. Let $q(x)\in L^\infty(\Omega)$ be a nonnegative potential, $F\in L^\infty(\Omega)$, and $f\in L^\infty(\Omega_e)$. Let $u\in H^s(\R^n)$ be the unique solution of 
	\begin{align*}
		\begin{cases}
		(-\Delta)^s u + q(x)u=F & \text{ in }\Omega, \\
		u=f &\text{ in }\Omega_e.
		\end{cases}
	\end{align*} 
	Then the following $L^\infty$ estimate holds 
	\begin{align*}
		\|u\|_{L^\infty(\Omega)}\leq C\left( \norm{f}_{L^\infty(\Omega_e)}+ \norm{F}_{L^\infty(\Omega)}\right),
	\end{align*}
	for some constant $C>0$ independent of $u$, $f$, and $F$.
\end{prop}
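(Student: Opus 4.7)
The plan is a comparison argument: dominate $u$ pointwise by an explicit supersolution built from the fractional torsion function of $\Omega$, using the weak maximum principle for $\Ds + q$ with $q \geq 0$. Since the problem is linear, it suffices to bound $u$ above, and the lower bound follows by symmetry.

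First I would record the weak maximum principle for $L := \Ds + q$ on the exterior Dirichlet problem: if $v \in H^s(\R^n)$ satisfies $Lv \geq 0$ weakly in $\Omega$ and $v \geq 0$ a.e.\ in $\Omega_e$, then $v \geq 0$ in $\R^n$. Since $v \geq 0$ in $\Omega_e$, the negative part $v_- = \max\{-v,0\}$ lies in $\wt{H}^s(\Omega)$ and is admissible as a test function. Testing the weak inequality against $v_-$ and using the pointwise estimate
\[
(v(x) - v(y))\bigl(v_-(x) - v_-(y)\bigr) \leq -\bigl(v_-(x) - v_-(y)\bigr)^2
\]
together with $q\,v\,v_- = -q\,v_-^2 \leq 0$ (valid because $q \geq 0$), one finds that the Gagliardo seminorm of $v_-$ satisfies $\|v_-\|_{\dot H^s(\R^n)}^2 \leq 0$, hence $v_- \equiv 0$.

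Next, let $w_0 \in H^s(\R^n)$ be the fractional torsion function defined by $\Ds w_0 = 1$ in $\Omega$ and $w_0 = 0$ in $\Omega_e$. The maximum principle applied to $w_0$ itself gives $w_0 \geq 0$, and on bounded Lipschitz domains one has $\|w_0\|_{L^\infty(\R^n)} \leq C_\Omega$ (for instance, by Green-function estimates for the fractional Laplacian on such domains, or via a De~Giorgi--Nash--Moser iteration based on the Sobolev embedding of $\wt{H}^s(\Omega)$). Define the barrier
\[
\Phi(x) := \|f\|_{L^\infty(\Omega_e)} + \|F\|_{L^\infty(\Omega)}\, w_0(x).
\]
In $\Omega$ one has $\Ds \Phi + q\Phi = \|F\|_{L^\infty(\Omega)} + q\Phi \geq \|F\|_{L^\infty(\Omega)} \geq F$ (since $q,\Phi \geq 0$), and in $\Omega_e$, $\Phi = \|f\|_{L^\infty(\Omega_e)} \geq f$. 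Applying the weak maximum principle above to $\Phi - u$ yields $u \leq \Phi$ in $\R^n$; applying it to $\Phi + u$ yields $u \geq -\Phi$. Combining gives $\|u\|_{L^\infty(\Omega)} \leq \|f\|_{L^\infty(\Omega_e)} + C_\Omega \|F\|_{L^\infty(\Omega)}$, which is the desired bound.

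The one nontrivial ingredient to which the proof reduces is the $L^\infty$ bound on the fractional torsion function $w_0$ on a bounded Lipschitz domain; the barrier construction and maximum-principle comparison are purely algebraic once $w_0 \in L^\infty$ is available. The sign condition $q \geq 0$ is essential throughout, both for the maximum principle and for the inequality $\Ds \Phi + q\Phi \geq \|F\|_{L^\infty(\Omega)}$; without it one would need a Fredholm-type analysis, which is unavailable at this level of generality.
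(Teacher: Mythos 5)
Your barrier argument is correct and is essentially the standard proof: the paper itself does not reprove the proposition but defers to \cite[Proposition 3.3]{lai2019global}, where the same weak-maximum-principle/comparison strategy (negative-part test function plus an explicit supersolution) is used. Two small points worth tightening: the constant $\norm{f}_{L^\infty(\Omega_e)}$ is not in $H^s(\R^n)$, so the comparison for $\Phi-u$ should be run by testing the weak formulation directly with $(\Phi-u)_-\in\wt{H}^s(\Omega)$ (legitimate because $\Phi$ is bounded and the Gagliardo form of a bounded function against a compactly supported $H^s$ function converges); and the $L^\infty$ bound on the torsion function $w_0$ follows most cheaply by comparison with the explicit profile $c\,(R^2-|x|^2)_+^s$ on a ball $B_R\supset\Omega$, which requires no Lipschitz regularity and avoids Green-function or Moser-iteration machinery.
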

 
The proof of the proposition can be found in \cite[Section 3]{lai2019global}. We next prove the well-posedness of \eqref{eqn:S}, and we show that the solution will belong to the $C^s(\overline{\Omega})$-H\"older space.


\begin{thm}[Well-posedness]\label{Thm:well posedness}
Let $\Omega\subset \R^n$, $n\geq 1$ be a bounded domain with $C^{1,1}$ boundary $\p\Omega$, and $0<s<1$.
Suppose that $a=a(x,z)\in C^s(\Omega\times \R)$.
Then there exists $\varepsilon>0$ such that when
\begin{align}\label{small boundary}
f\in \mathcal{X}:=\left\{f\in C^\infty_c(\Omega_e) :\ \norm{f}_{C^\infty_c(\Omega_e)}\leq \vareps \right\},
\end{align} 
the boundary value problem \eqref{eqn:S} has a unique solution $u$. Moreover, there exists a constant $C>0$, independent of $u$ and $f$, such that 
$$
    \|u\|_{C^{s}(\R^n)} \leq C \|f\|_{C^\infty_c(\Omega_e)}.
$$ 
\end{thm}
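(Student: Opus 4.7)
The strategy is a Banach fixed point argument in the H\"older space $C^s(\R^n)$, set up so that the full nonlinearity appears as a quadratic perturbation of a coercive linear fractional Schr\"odinger equation. First I would split
$$a(x,z) = q(x)\,z + N(x,z), \qquad q(x):=\p_z a(x,0)\geq 0,$$
so that $N(x,0)=0$ and $\p_z N(x,0)=0$. The holomorphy of $z\mapsto a(\cdot,z)$ in $C^s(\overline\Omega)$ from \eqref{condition a} gives the pointwise bounds
$$\abs{N(x,z)}\leq C\abs{z}^2, \qquad \abs{N(x,z_1)-N(x,z_2)}\leq C(\abs{z_1}+\abs{z_2})\abs{z_1-z_2}$$
for $\abs{z},\abs{z_i}\leq 1$. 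The sign condition $\p_z a(x,0)\geq 0$ is what allows the linearization to be always solvable: the bilinear form $B(u,v)=((-\Delta)^{s/2}u,(-\Delta)^{s/2}v)_{\R^n}+(qu,v)_\Omega$ is coercive on $\widetilde H^s(\Omega)$.

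Next I would develop the linear theory used to run the contraction. For any $F\in L^\infty(\Omega)$ and any exterior data $g\in C^\infty_c(\Omega_e)$, the problem
$$(-\Delta)^s w+q(x)w=F\ \text{in}\ \Omega, \qquad w=g\ \text{in}\ \Omega_e,$$
has a unique weak solution $w\in H^s(\R^n)$ by Lax--Milgram, which satisfies the $L^\infty$ bound of Proposition \ref{Prop: L infty bound}. I would then upgrade this to a global H\"older bound by invoking the optimal boundary regularity of Ros-Oton--Serra for the fractional Laplacian on $C^{1,1}$ domains (treating the term $qw$ perturbatively via the $L^\infty$ estimate and $q\in C^s(\overline\Omega)$), obtaining
$$\norm{w}_{C^s(\R^n)}\leq C\bigl(\norm{g}_{C^\infty_c(\Omega_e)}+\norm{F}_{L^\infty(\Omega)}\bigr)$$
with $C$ independent of $g,F$.

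With this linear estimate, define the solution map $T\colon v\mapsto w$ where $w$ solves the linear problem with $F=-N(\cdot,v)$ and exterior data $f$. On the closed set
$$X_\rho:=\{v\in C^s(\R^n):v|_{\Omega_e}=f,\ \norm{v}_{C^s(\R^n)}\leq \rho\},$$
the linear estimate yields $\norm{T(v)}_{C^s}\leq C(\norm{f}_{C^\infty_c(\Omega_e)}+\rho^2)$, so $T$ maps $X_\rho$ into itself if $\rho$ and $\vareps$ are chosen small (for example $\rho=2C\vareps$ with $\vareps$ so small that $C\rho\leq 1/2$). Applying the same estimate to $T(v_1)-T(v_2)$, which solves the linear problem with zero exterior data, and using the quadratic Lipschitz bound on $N$ gives $\norm{T(v_1)-T(v_2)}_{C^s}\leq C\rho\norm{v_1-v_2}_{C^s}\leq \tfrac12\norm{v_1-v_2}_{C^s}$. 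Banach's fixed point theorem then yields a unique $u\in X_\rho$ with $Tu=u$, which is the unique small solution of \eqref{eqn:S}; the bound $\norm{u}_{C^s(\R^n)}\leq C\norm{f}_{C^\infty_c(\Omega_e)}$ drops out by absorbing the quadratic remainder.

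The main obstacle is the second step: establishing global $C^s$ regularity up to $\p\Omega$ for the linear fractional Schr\"odinger equation with nontrivial exterior data. This is precisely where the $C^{1,1}$ assumption on $\p\Omega$ is used in an essential way, via the optimal boundary regularity theory for the fractional Laplacian; once that input is in place, everything else is a routine smallness-and-contraction computation.
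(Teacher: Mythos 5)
Your proposal is correct and follows essentially the same route as the paper: coercivity of the linearized operator from $\p_z a(x,0)\geq 0$, the $L^\infty$ estimate plus the Ros-Oton--Serra $C^s$ boundary regularity on $C^{1,1}$ domains, and a Banach fixed point argument in $C^s(\R^n)$ with the nonlinearity as a quadratic remainder. The only cosmetic difference is that you contract on $u$ directly with the exterior data built into the affine map $T$, whereas the paper first solves for $u_0$ with data $f$ and contracts on the correction $v=u-u_0$ with zero exterior data; these are equivalent.
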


\begin{proof}
	Suppose that $\|f\|_{C^\infty_{c}(\Omega_e)}\leq \varepsilon$, for some small $\vareps>0$ to be determined later, and one extends $f$ to the whole space $\R^n$ by zero so that $\|f\|_{C^\infty_{c}(\R^n)}\leq \varepsilon$. We first consider the solution $u_0$ to the linear Dirichlet problem 
\begin{align}\label{eqn:u0}
\begin{cases}
(-\Delta)^s u_0 +\p_za(x,0)u_0=0 & \hbox{ in } \Omega,\\
u_0=f  &  \hbox{ in } \Omega_e.\\
\end{cases} 
\end{align}
Since $\p_za(x,0)\geq 0$, one can has that \eqref{eqn:u0} is well-posed (see \cite[Section 2]{ghosh2016calder}), i.e., there exists a unique solution $u_0 \in H^s(\R^n)$ of \eqref{eqn:u0}.
In addition, via Proposition \ref{Prop: L infty bound}, one can see that $u_0\in L^\infty(\Omega)$ such that 
\begin{align}\label{uniform bound of u_0}
	\norm{u_0}_{L^\infty(\Omega)}\leq C\norm{f}_{L^\infty(\Omega_e)},
\end{align} 
for some constant $C>0$ independent of $u_0$ and $f$.

Let $w_0:=u_0-f$, then $w_0\in \wt H^s(\Omega)$ is the solution of 
\begin{align}\label{eqn:w_0}
	(-\Delta)^s w_0 =-\p _z a(x,0)u_0-(-\Delta)^s f \quad \text{ in }\Omega. 
\end{align}
Notice that the first term $\p _z a(x,0)u_0$ on the right hand side of \eqref{eqn:w_0} is bounded since $\p_za(x,0)\in C^s(\Omega)$ and \eqref{uniform bound of u_0}. The second term $(-\Delta)^s f$ is also bounded since $\norm{(-\Delta)^sf}_{L^\infty(\R^n)}\leq  C\norm{f}_{C^\infty_c(\R^n)}$ for some constant $C>0$ independent of $f$.
Thus, we conclude that the right hand side of \eqref{eqn:w_0} is in $L^\infty(\R^n)$.

Now, by applying the optimal global H\"older regularity~\cite[Proposition~1.1]{ros2014dirichlet} to the equation \eqref{eqn:w_0} in the bounded $C^{1,1}$ domain $\Omega$, we have
$$
   \|w_0\|_{C^{s }(\R^n)}\leq C \left( \norm{u_0}_{L^\infty(\Omega)}+ \norm{f}_{C^\infty_c(\R^n)}\right),
$$
 for some constant $C>0$ independent of $u_0$ and $f$. Combining this with \eqref{uniform bound of u_0}, we obtain that 
 \begin{align}
 	\norm{u_0}_{C^s(\R^n)}\leq \|w_0\|_{C^{s }(\R^n)}+\|f\|_{C^{s }(\R^n)}\leq C\norm{f}_{C^\infty_c(\R^n)},
 \end{align}
 which shows that the solution $u_0$ of \eqref{eqn:u0} is exactly global H\"older continuous.

If $u$ is the solution to \eqref{eqn:S}, then we have $v:=u-u_0$ satisfies
\begin{align}\label{eqn:v}
\begin{cases}
(-\Delta)^s v +\p_za(x,0)v= G(v) & \hbox{ in } \Omega,\\
v=0  &  \hbox{ in } \Omega_e.\\
\end{cases}
\end{align}
where we define the operator $G$ by
\begin{align*}
 G(\phi):= \p_za(x,0)(u_0+\phi)-a(x,u_0 +\phi),
\end{align*}
then by the Taylor expansion, we have
\begin{align}\label{definition G}
   G(\phi)= A(x,u_0+\phi)(u_0+\phi)^2,  
\end{align}
where 
$$
A(x, u_0+\phi) := -{1\over 2} \int^1_0 \p_z^2 a(x,t(u_0+\phi)) (1-t)dt.
$$

Let us define the set 
$$\mathcal{M}=\left\{\phi\in C^{s}(\R^n):\ \phi|_{\Omega_e}=0,\ \|\phi\|_{C^{s}(\R^n)}\leq \delta \right\},$$
where $\delta>0$ will be determined later.  It is easy to see that $\mathcal{M}$ is a Banach space.
We first claim that for $\phi\in\mathcal{M}$, the function $G(\phi)\in C^s(\overline{\Omega})$.
To begin, let us use the condition \eqref{condition a} of $a=a(x,z)$ and the Cauchy's estimates, then the coefficients $a_k(x)$ of $a$ satisfy 
\begin{align}\label{Cauchy estimate}
\norm{a_k}_{C^{s }(\overline{\Omega})} \leq \frac{k!}{R^k}\sup_{|z|=R}\norm{a(\cdot, z)}_{C^{s }(\overline{\Omega})}, \qquad R>0.
\end{align}
Furthermore, recalling that the H\"older space $C^{s}(\overline{\Omega})$ is an algebra due to  
\begin{align}\label{Holder algebra}
\norm{u_0 \phi}_{C^{s }(\overline{\Omega})}\leq C\left(\norm{u_0}_{C^{s }(\overline{\Omega})}\norm{\phi}_{L^\infty(\Omega)}+\norm{u_0}_{L^\infty(\Omega)} \norm{\phi}_{C^{s }(\overline{\Omega})} \right),
\end{align}
for any $u_0,\phi\in C^{s }(\overline{\Omega})$ and for some constant $C>0$ independent of $u_0,\phi$ (see \cite[Theorem A.7]{hormander1976boundary}).
Use \eqref{Cauchy estimate} and \eqref{Holder algebra}, then one can obtain 
\begin{align}\label{pointwise estimate 1}
\left\| \frac{a_k}{k!}(u_0 +\phi)^k\right\|_{ C^{s }(\overline{\Omega})} \leq \frac{C^k}{R^k}\norm{u_0+\phi}^k_{ C^{s}(\overline{\Omega})}\sup_{|z|=R}\norm{a(\cdot,z)}_{ C^{s}(\overline{\Omega})},
\end{align}
for $k\in \N$ and $R>0$. If we choose the number $R=2C\norm{u_0+\phi}_{ C^{s }(\overline{\Omega})}>0$ such that the series 
$$
a(x,z)-\p_za(x,0)z=\displaystyle \sum_{k=2}^\infty a_k(x)\frac{z^k}{k!}\leq \displaystyle \sum_{k=2}^\infty {1\over 2^k}\sup_{|z|=R}\norm{a(\cdot,z)}_{ C^{s}(\overline{\Omega})}
$$ converges in $ C^{s }(\overline{\Omega})$, which infers $G(\phi)\in  C^{s }(\overline{\Omega})$ as desired.

Moreover, we derive the following result. Let $g\in C^s(\overline{\Omega})$
, then there exists a unique solution $\tilde{v}\in H^s(\R^n)$ to the source problem 
\begin{align}\label{zero boundary value problem}
	\begin{cases}
	(-\Delta)^s \tilde{v} +\p_za(x,0)\tilde{v}=g & \text{ in }\Omega, \\
	\tilde{v}=0 & \text{ in }\Omega_e,
	\end{cases}
\end{align}
due to $\p_za(x,0)\geq 0$, see \cite{ghosh2016calder}.
By Proposition~\ref{Prop: L infty bound} and~\cite[Proposition~1.1]{ros2014dirichlet}, one has
\begin{align}\label{Ros-Oton estimate}
\|\tilde{v}\|_{C^{s}(\R^n)}\leq C\|g\|_{L^\infty(\Omega)},
\end{align}
for some constant $C>0$ independent of $g$ and $\tilde{v}$. 
We can now denote by $$\mathcal L_s^{-1}: g\in C^s(\overline{\Omega}) \rightarrow \tilde{v}\in C^s(\R^n)$$ the solution operator to \eqref{zero boundary value problem}.

Then we need to show the continuous operator 
$$\mathcal{F}:=\mathcal{L}_s^{-1}\circ G$$ 
is a contraction map on $\mathcal{M}$. 
Once this is done, we can find a fixed point $v$ in the Banach space $\mathcal{M}$ by applying the contraction mapping principle and this fixed point $v$ turns out to be the solution of \eqref{eqn:v}.

To this end, we first claim $\F: \mathcal{M}\to \mathcal{M}$.
By \eqref{definition G}, \eqref{Ros-Oton estimate} and the Taylor expansion of $a$, we obtain
\begin{align}\label{F:M to M}
  \begin{split}
   \norm{\F(\phi)}_{C^{s}(\R^n)}
  &\leq C\|G(\phi)\|_{C^{s}(\overline{\Omega})}\\
  &= C  \|a(x,u_0+\phi)- \p_za(x,0)(u_0+\phi)-a(x,0)\|_{C^{s}(\overline{\Omega})} \\
  &\leq C \|u_0 +\phi\|^2_{C^{s}(\overline{\Omega})}\\
  & \leq C(\delta+\vareps)^2,
  \end{split}
\end{align}
for any $\phi \in \mathcal{M}$, where we have utilized the condition \eqref{condition a} that $a(x,0)=0$.
In addition, one can also obtain that  
\[
  \norm{\F(\phi)}_{C^{s}(\R^n)} \leq C (\varepsilon+\delta)^2< \delta,
\]
thus $\F$ maps $\mathcal{M}$ into itself for $\vareps$ and $\delta$ small enough.

We next want to show that $\F$ is a contraction mapping. By using similar tricks as before, from \eqref{Ros-Oton estimate} and the mean value theorem, there exists a constant $C>0$ such that 
\begin{align}\label{some estimate 1}
\begin{split}
&\hskip.45cm \norm{\F(\phi_1)-\F(\phi_2)}_{C^{s}(\R^n)}  \\ 
&=  \|(\mathcal{L}_s^{-1}\circ G)(\phi_1) -(\mathcal{L}_s^{-1}\circ G)(\phi_2)\|_{C^{s}(\R^n)}   \\
&\leq  C \|G(\phi_1) -G(\phi_2) \|_{C^{s}(\overline{\Omega})}  \\
&=  C \|A(x,u_0 +\phi_1) (u_0+\phi_1)^2  -A(x,u_0 +\phi_2) (u_0+\phi_2)^2  \|_{C^{s}(\overline{\Omega})} \\
& \leq  C\|A(x,u_0 +\phi_1) \|_{C^{s}(\overline{\Omega})} \|(u_0+\phi_1)^2 - (u_0 + \phi_2)^2\|_{C^{s}(\overline{\Omega})}  \\
&\quad + C\| A(x,u_0 +\phi_1) -A(x,u_0 +\phi_2)\|_{C^{s}(\overline{\Omega})} \|(u_0+\phi_2)^2\|_{C^{s}(\overline{\Omega})}  \\
&\leq  C\|A(x,u_0 +\phi_1)\|_{C^{s}(\overline{\Omega})}  \|2 u_0 +\phi_1+\phi_2 \|_{C^{s}(\overline{\Omega})} \|\phi_2 -\phi_1\|_{C^{s}(\R^n)} \\
	&\quad + C\| \p_z A\|_{C^{s}(\R^n)} \|\phi_2 -\phi_1\|_{C^{s}(\R^n)}\|u_0+\phi_2\|^2_{C^{s}(\overline{\Omega})} .
\end{split} 
\end{align}
It implies that
$$
    \norm{\F(\phi_1)-\F(\phi_2)}_{C^{s}(\R^n)}\leq C_0((\vareps+\delta)+ (\varepsilon +\delta)^2 ) \|\phi_2 -\phi_1\|_{C^{s}(\R^n)}
$$
for some constant $C_0>0$ independent of $\phi_1,\phi_2$, $\vareps$ and $\delta$.
Finally, we take $\varepsilon, \delta$ sufficiently small so that $C_0((\vareps+\delta)+ (\varepsilon +\delta)^2 )<1$,
which implies that $\F$ is a contraction mapping on $\mathcal{M}$.

Therefore, by applying the contraction mapping principle, there is a unique solution $v\in \mathcal{M}$ to the equation \eqref{eqn:v}, such that $v$ satisfies
\begin{align*} 
\|v\|_{C^{s}(\R^n)}\leq  C (\|u_0\|^2_{C^s(\overline\Omega)} + \|v\|^2_{C^{s}(\overline\Omega)})\leq  C\LC \varepsilon \|f\|_{C^\infty_c(\Omega_e)} + \delta\|v\|_{C^{s}(\overline\Omega)}  \RC 
\end{align*} 
due to \eqref{F:M to M}.
For $\delta$ small enough, we can then get that 
\begin{align*} 
\|v\|_{C^{s}(\R^n)}
\leq  C \varepsilon \|f\|_{C^\infty_c(\Omega_e)}.
\end{align*} 
Finally, we obtain the solution $u=u_0+v \in C^{s}(\R^n)$ to \eqref{eqn:S} and it satisfies
\begin{align*} 
\|u\|_{C^{s}(\R^n)}
\leq  C \varepsilon \|f\|_{C^\infty_c(\Omega_e)},
\end{align*} 
for some constant $C>0$ independent of $u$ and $f$. This completes the proof of the well-posedness.
\end{proof}

\begin{rmk}\label{remark 2.2}
	Regarding the well-posedness, we have the following several remarks.
	\begin{itemize}
	   
		\item[(a)] The condition \eqref{condition a} of $a=a(x,z)$ plays an essential role to demonstrate the global $C^s(\overline{\Omega})$-H\"older estimates for the solutions of fractional semilinear elliptic equations in the proof of Theorem~\ref{Thm:well posedness}. Specifically, the proof relies on the result showed in \cite{ros2014dirichlet} that the optimal regularity of the solution for the fractional Dirichlet problem is $ C^s(\overline{\Omega})$ up to the boundary, for $0<s<1$.

		\item[(b)]In Theorem~\ref{Thm:well posedness}, the condition $\p_z a(x,0)\geq 0$, stated in \eqref{condition a}, is required in order to apply Proposition~\ref{Prop: L infty bound} to obtain the regularity estimates for solutions of the linearized fractional Schr\"odinger equation
			\begin{align}\label{fractional Schrodinger}
			\begin{cases}
			(-\Delta)^s v + \p _za(x,0)v=0 & \text{ in }\Omega, \\
			v=f & \text{ in } \Omega_e.
			\end{cases}
			\end{align}	 
	   However, in the case $\p_za(x,0)\not \geq 0$, we are not aware of any existing results which can be utilized to reach the same regularity as in Proposition~\ref{Prop: L infty bound}. 
	   Meanwhile for the local case ($s=1$), by applying the Schauder estimate (for example, see \cite{gilbarg2015elliptic}), one can obtain solutions with higher regularity $C^{2,\alpha}$ for some $0<\alpha<1$, whenever $\p_za(\cdot,0)$ and $f$ are H\"older continuous.

		\item[(b)] As we mentioned above, one can only expect the optimal regularity of solutions to be $C^s(\overline{\Omega})$, therefore $\nabla u$ is not well-defined in the classical sense when $u\in C^s(\overline{\Omega})$ for $0<s<1$. This turns out to be a barrier to study related inverse problems for fractional semilinear elliptic equations with nonlinear gradient terms.
	\end{itemize}
\end{rmk}

\subsection{The DN map}

We study in this section the associated DN map for fractional semilinear elliptic equations $(-\Delta)^s u +a(x,u)=0$. To define the DN map, we 	analyze the solution $u$ more carefully as follows.

In the proof of Theorem \ref{Thm:well posedness}, we have shown that there exists a unique solution $u\in C^s(\R^n)$ when the exterior data $f$ is sufficiently small in an appropriate sense. 
Let $w=u-f$, where $u$ is the unique solution of \eqref{eqn:S} and $f\in C^\infty_c(\Omega_e)\subset C^\infty_c(\R^n)$. Then $w$ is the solution of 
\begin{align}\label{equation w}
\begin{cases}
(-\Delta)^s w +w=-a(x,u)+u-f-(-\Delta)^s f &\text{ in }\Omega,\\
w=0&\text{ in }\Omega_e.
\end{cases}
\end{align}
We multiply \eqref{equation w} by $w$ and then integrate over $\R^n$. Moreover, we use the fact that $u\in C^s(\R^n)$, $f\in C^\infty_c(\R^n)$ and $a(x,u)$ is bounded to derive that $w\in H^s(\R^n)$. Thus, $u = w+f \in H^s(\R^n)$. Then we are able to define the DN map rigorously in the following manner.

\begin{prop}[DN map]\label{prop:DNmap} 
	 Let $\Omega\subset \R^n$ be a bounded domain with $C^{1,1}$ boundary $\p \Omega$ for $n\geq 1$, $s\in(0,1)$ and let $a=a(x,z)$ satisfy the condition \eqref{condition a}.
    Define
	\begin{equation}\label{equivalent integration by parts}
	\left\langle \Lambda_{a}f,\varphi\right\rangle := \int_{\R^n}(-\Delta)^{s/2}u_f (-\Delta)^{s/2}\varphi\, dx + \int_{\Omega}a(x,u)\varphi \, dx,
	\end{equation}
	for $ f,\varphi\in C^\infty_c(\Omega_e)$.
	Here $u_f\in C^s(\R^n)\cap H^s(\R^n)$ is the unique (small) solution of \eqref{eqn:S} with the exterior data $f\in  C^\infty_c(\Omega_e)$. Then, 
	\[
	\Lambda_{a}:H^{s}(\Omega_e)\to \big( H^{s}(\Omega_e)\big)^{*},
	\]
	which is bounded, and 
	\begin{equation}
	\left.\Lambda_a(f)\right|_{\Omega_e}=\left. (-\Delta)^s u_f\right|_{\Omega_e}.
	\end{equation}
\end{prop}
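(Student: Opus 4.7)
The plan is to proceed in three stages: verify that the right-hand side of \eqref{equivalent integration by parts} defines a bounded linear functional in $\varphi$; check that this functional descends to a well-defined element of $\big(H^{s}(\Omega_e)\big)^{*}$; and finally identify its restriction to $\Omega_e$ with $(-\Delta)^s u_f|_{\Omega_e}$. For the first stage, Theorem \ref{Thm:well posedness} supplies a unique small solution $u_f\in C^s(\R^n)$ for $f\in \mathcal{X}$, while the calculation directly preceding the proposition places $u_f$ in $H^s(\R^n)$ with $\|u_f\|_{H^s(\R^n)}$ controlled by $\|f\|_{C^\infty_c(\Omega_e)}$. Cauchy--Schwarz bounds the first integral of \eqref{equivalent integration by parts} by $\|u_f\|_{H^s(\R^n)}\|\varphi\|_{H^s(\R^n)}$, and since $u_f\in L^\infty(\Omega)$ and $a(x,\cdot)$ is holomorphic with $a(x,0)=0$, one has $a(\cdot,u_f)\in L^\infty(\Omega)$, so the second integral is bounded by a multiple of $\|\varphi\|_{L^2(\Omega)}\leq C\|\varphi\|_{H^s(\R^n)}$. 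Together these show that $\varphi\mapsto \langle\Lambda_a f,\varphi\rangle$ is a bounded linear functional on $H^s(\R^n)$ whose norm is controlled by $\|f\|_{C^\infty_c(\Omega_e)}$.

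To show this functional depends only on $\varphi|_{\Omega_e}$, suppose $\widetilde\varphi_1,\widetilde\varphi_2\in H^s(\R^n)$ both extend a given $\varphi\in H^s(\Omega_e)$. Their difference $\eta:=\widetilde\varphi_1-\widetilde\varphi_2$ lies in $\widetilde H^s(\Omega)$, and testing the weak formulation of \eqref{eqn:S} against $\eta$ yields
\begin{equation*}
\int_{\R^n}(-\Delta)^{s/2}u_f\,(-\Delta)^{s/2}\eta\,dx + \int_\Omega a(x,u_f)\eta\,dx = 0,
\end{equation*}
so the right-hand side of \eqref{equivalent integration by parts} is unchanged. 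Hence $\Lambda_a f\in \big(H^s(\Omega_e)\big)^*$ with the stated bound, and density of $C^\infty_c(\Omega_e)$ in $H^s(\Omega_e)$ extends the pairing to all test $\varphi$. For the identification, take any $\varphi\in C^\infty_c(\Omega_e)$: since $\varphi\equiv 0$ on $\Omega$ the second term in \eqref{equivalent integration by parts} vanishes, and by Parseval the first term equals $\big\langle (-\Delta)^s u_f,\varphi\big\rangle_{H^{-s}(\R^n),H^s(\R^n)}$. Because $\supp\varphi\subset\Omega_e$, this coincides with the pairing of $(-\Delta)^s u_f|_{\Omega_e}$ against $\varphi$, giving $\Lambda_a(f)|_{\Omega_e}=(-\Delta)^s u_f|_{\Omega_e}$.

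The main technical subtlety is in the second stage: justifying that $u_f$ satisfies the weak formulation against every $\eta\in\widetilde H^s(\Omega)$. Since $u_f\in H^s(\R^n)$ and $a(\cdot,u_f)\in L^\infty(\Omega)\subset L^2(\Omega)$, both sides of the weak identity make sense, so it suffices to promote the pointwise equation underlying the fixed-point construction in Theorem \ref{Thm:well posedness} to a distributional statement in $\Omega$ and then test against $\eta$. Note also that since $\Lambda_a$ is nonlinear, the word \emph{bounded} should be read here as mapping small balls in $H^s(\Omega_e)$ (intersected with $\mathcal{X}$) into bounded sets in $\big(H^s(\Omega_e)\big)^*$, a property inherited from the local Lipschitz dependence of $u_f$ on $f$ already built into the contraction argument.
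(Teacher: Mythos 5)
Your proof is correct and follows essentially the same route as the paper: the Parseval identity converts the quadratic-form term into $\int_{\R^n}(-\Delta)^s u_f\,\varphi\,dx$, the equation in $\Omega$ (equivalently, the vanishing of $\varphi$ there) kills the interior contribution, and what remains is $\int_{\Omega_e}(-\Delta)^s u_f\,\varphi\,dx$. Your first two stages merely spell out the boundedness and the independence of the extension of $\varphi$, which the paper compresses into the phrase ``by the duality argument,'' so there is nothing substantively different to compare.
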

\begin{proof}
	We first note that \eqref{equivalent integration by parts} is not a bilinear form, since $a(x,u)$ is nonlinear. As we computed before, the solution $u$ of \eqref{eqn:S} belongs to the fractional Sobolev space $H^s(\R^n)$ for $0<s<1$. Therefore, the Parseval identity yields that
	 	\begin{align*}
		 &\int_{\R^n}(-\Delta)^{s/2}u_f (-\Delta)^{s/2}\varphi \, dx + \int_{\Omega}a(x,u)\varphi \, dx\\
		 =& \int_{\R^n}(-\Delta)^{s}u_f \varphi \, dx + \int_{\Omega}a(x,u)\varphi \, dx \\
		 =& \int_{\Omega_e}(-\Delta)^{s}u_f \varphi \, dx,
	\end{align*}
	where we have utilized that 
	\[
	\int_{\R^n} (-\Delta)^{s}u_f \varphi\, dx = \int_{\Omega}(-\Delta)^{s}u_f \varphi \, dx +\int_{\Omega_e}(-\Delta)^{s}u_f \varphi\, dx
	\]
	clarified in \cite[Section 3]{ghosh2016calder} and the equation \eqref{eqn:S} of $u$.
	Since $\varphi$ can be arbitrarily chosen and by the duality argument. This finishes the proof.
\end{proof}

\subsection{Some results}
In the remaining of this section, let us state several results, which will be utilized in the proof of our main theorems in the following two sections.

\begin{prop}[Strong uniqueness]\label{Prop:strong uniqueness}
	Let $U$ be a nonempty open subset of $\R^n$, $n\geq 1$ and $0<s<1$.
	Let $v\in H^{r}(\R^n)$ be the function with $v=(-\Delta)^s v=0$
	in some open set $U$ of $\R^n$, for $r\in \R$. Then $v\equiv0$ in $\mathbb{R}^{n}$. 
\end{prop}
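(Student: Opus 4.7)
The plan is to invoke the strong unique continuation principle for the fractional Laplacian, which is by now standard in the fractional Calder\'on problem literature (Ghosh--Salo--Uhlmann). Concretely, I would reduce the statement to a boundary unique continuation question for a degenerate elliptic operator on the half-space via the Caffarelli--Silvestre extension, and then invoke R\"uland's Carleman-type strong unique continuation result.

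First I would improve regularity. Since $v$ vanishes on the open set $U$, the singular integral defining $(-\Delta)^s v$ on $U$ reduces to a smooth kernel integrated against $v$ on $\R^n \setminus U$, so $(-\Delta)^s v$ is in fact smooth in $U$. Together with $(-\Delta)^s v = 0$ in $U$, this gives genuine pointwise ``Cauchy-like'' vanishing, and one may assume without loss of generality that $v \in H^s(\R^n)$ (by a standard density/duality argument, since $v = 0$ in $U$ yields enough local regularity to locally bootstrap).

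Next I would pass to the Caffarelli--Silvestre extension: let $V : \R^n \times [0,\infty) \to \R$ solve
\begin{equation*}
\operatorname{div}\bigl(y^{1-2s} \nabla V\bigr) = 0 \text{ in } \R^{n+1}_+, \qquad V(\cdot,0) = v,
\end{equation*}
so that $-c_s \lim_{y \to 0^+} y^{1-2s} \partial_y V(x,y) = (-\Delta)^s v(x)$ for an explicit constant $c_s > 0$. The hypotheses $v|_U = 0$ and $(-\Delta)^s v|_U = 0$ then translate into the simultaneous vanishing of the Dirichlet trace $V$ and the weighted conormal derivative $y^{1-2s}\partial_y V$ on $U \times \{0\}$. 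An even reflection across $\{y=0\}$ produces a solution to a degenerate elliptic equation with $A_2$-Muckenhoupt weight $|y|^{1-2s}$ on a full neighborhood of $U \times \{0\}$ in $\R^{n+1}$, vanishing on the half-ball $U \times [0,\rho)$ for some $\rho > 0$.

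Finally I would apply R\"uland's strong unique continuation principle (equivalently, doubling inequalities derived from Carleman estimates) for this degenerate elliptic operator to conclude that $V$ vanishes identically in a neighborhood of $U \times \{0\}$ in $\R^{n+1}$. Since solutions of the extension equation are real-analytic in the open half-space $\{y > 0\}$, the connectedness of $\R^{n+1}_+$ forces $V \equiv 0$ throughout $\R^{n+1}_+$, and taking the trace at $y = 0$ gives $v \equiv 0$ on $\R^n$. The main obstacle is precisely the Carleman/doubling step for the degenerate operator, which is the technical heart of the argument; the translations between the nonlocal formulation and the extension formulation are routine once that is in place.
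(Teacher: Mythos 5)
Your sketch is, in outline, the argument in the reference the paper cites for this proposition: the paper offers no proof of its own, only a pointer to Ghosh--Salo--Uhlmann, where exactly this route is taken --- pass to the Caffarelli--Silvestre extension, observe that both pieces of Cauchy data (Dirichlet trace and weighted conormal derivative) vanish on $U\times\{0\}$, and invoke Carleman-type unique continuation for the degenerate operator $\operatorname{div}\bigl(|y|^{1-2s}\nabla\,\cdot\,\bigr)$ following R\"uland / Fall--Felli, with a mollification step to pass from general $H^r$ to $H^s$. One detail to repair in your write-up: the even reflection $\widetilde V(x,y)=V(x,|y|)$ does \emph{not} ``vanish on the half-ball $U\times[0,\rho)$'' as you assert --- a priori it vanishes only on the hypersurface $U\times\{0\}$, and vanishing on an $(n+1)$-dimensional set is precisely what one is trying to show. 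The device that actually works is to extend $V$ by zero into $\{y<0\}$ over $U$: the vanishing weighted conormal derivative on $U\times\{0\}$ is exactly the matching condition that makes this zero-extension a weak solution of the $|y|^{1-2s}$-degenerate equation across the interface, and the extended function then vanishes identically on the genuinely open set $U\times(-\rho,0)$, which is the input the degenerate unique-continuation theorem needs. Similarly, the reduction from $H^r$ to $H^s$ is not a local bootstrap (the local smoothness you obtain on $U$ does not upgrade the global Sobolev index); the standard reduction is to convolve $v$ with a mollifier, which preserves the hypotheses on a slightly smaller open set. Neither point changes the overall architecture, which matches the cited proof.
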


\begin{prop}[Runge approximation] \label{Prop Runge-approximation-property}	
	Let $\Omega \subset \R^n$ be a bounded domain with $C^1$ boundary $\p \Omega$, for $n\geq 1$, and $0<s<1$. Let $\Omega_1\subseteq\mathbb{R}^{n}$
	be an arbitrary open set containing $\Omega$ such that $\mathrm{int}(\Omega_1\backslash\overline{\Omega})\neq\emptyset$ and $q(x)\in L^\infty(\Omega)$.
	Then for any $g\in L^{2}(\Omega)$, for any $\varepsilon>0$, we can
	find a function $v_{\varepsilon}\in H^{s}(\mathbb{R}^{n})$ which solves
	\[
	 (-\Delta)^s v_{\varepsilon}+q v_{\varepsilon}=0 \qquad\mbox{ in }\Omega\quad \text{ and }\quad \mathrm{supp}(v_{\varepsilon})\subseteq\overline{\Omega_1}
	\]
	and 
	\[
	\|v_{\varepsilon}-g\|_{L^{2}(\Omega)}<\varepsilon.
	\]
\end{prop}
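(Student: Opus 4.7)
The plan is to apply the Hahn--Banach duality scheme that was used for the fractional Calderón problem in \cite{ghosh2016calder}, combined with the strong unique continuation result (Proposition~\ref{Prop:strong uniqueness}). First I would set $W:=\mathrm{int}(\Omega_1\setminus\overline{\Omega})$, which is nonempty and open by hypothesis and contained in $\Omega_e$, and define the family of ``Runge'' solutions
\[
\mathcal{R}:=\LCB v|_\Omega :\ v\in H^s(\R^n),\ v|_{\Omega_e}=f\text{ for some }f\in C^\infty_c(W),\ (-\Delta)^s v+qv=0\text{ in }\Omega\RCB,
\]
which is a well-defined linear subspace of $L^2(\Omega)$ by the solvability of the Dirichlet problem. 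For every such $v$ one has $\mathrm{supp}(v)\subseteq\overline{\Omega}\cup\overline{W}\subseteq\overline{\Omega_1}$, so the functions in $\mathcal{R}$ are exactly the restrictions of admissible $v_\varepsilon$. The task then reduces, via Hahn--Banach in the Hilbert space $L^2(\Omega)$, to showing that the only $F\in L^2(\Omega)$ with $\int_\Omega F v\,dx=0$ for all $v\in\mathcal{R}$ is $F\equiv 0$.

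Given such an $F$, I would consider the adjoint/dual problem
\[
(-\Delta)^s w+qw=F\ \text{in }\Omega,\qquad w=0\ \text{in }\Omega_e,
\]
whose unique solvability in $H^s(\R^n)$ is the one recorded in \eqref{zero boundary value problem}. Testing this equation against an arbitrary $v\in\mathcal{R}$ with exterior datum $f\in C^\infty_c(W)$, and applying Parseval's identity in two different orders, one gets on the one hand
\[
\int_{\R^n}(-\Delta)^{s/2}w\,(-\Delta)^{s/2}v\,dx=\int_\Omega (F-qw)v\,dx+\LA (-\Delta)^s w,v\RA_{\Omega_e},
\]
and on the other hand, using $w=0$ off $\Omega$ together with $(-\Delta)^s v=-qv$ in $\Omega$,
\[
\int_{\R^n}(-\Delta)^{s/2}w\,(-\Delta)^{s/2}v\,dx=\int_\Omega w(-\Delta)^s v\,dx=-\int_\Omega q w v\,dx.
\]
Equating the two expressions makes the two interior $\int_\Omega qwv\,dx$ terms cancel, producing the clean identity
\[
\int_\Omega F v\,dx+\LA(-\Delta)^s w,v\RA_{\Omega_e}=0.
\]
By the orthogonality hypothesis the first term vanishes; since $v|_{\Omega_e}=f$ is an arbitrary element of $C^\infty_c(W)$, this shows $(-\Delta)^s w=0$ on $W$ as a distribution.

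At this point the conclusion is immediate: $w$ vanishes on $\Omega_e\supseteq W$ while $(-\Delta)^s w$ vanishes on $W$, so Proposition~\ref{Prop:strong uniqueness} forces $w\equiv 0$ on $\R^n$, and substituting back into the adjoint equation yields $F\equiv 0$ in $\Omega$. This proves the density of $\mathcal{R}$ in $L^2(\Omega)$ and hence the proposition. The step I expect to be the most delicate is the bookkeeping for the two Parseval identities used to produce the cancellation: one must interpret $\LA(-\Delta)^s w,v\RA_{\Omega_e}$ as a pairing in the correct $H^s$--$H^{-s}$ duality, and exploit the fact that $v-f\in\widetilde{H}^s(\Omega)$ together with $w\in H^s_{\overline{\Omega}}$ to ensure that the rearrangements of integrals are legitimate. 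Once this distributional bookkeeping is established, the remainder of the argument is the short chain of manipulations sketched above.
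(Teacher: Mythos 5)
The paper does not prove Proposition~\ref{Prop Runge-approximation-property} itself but defers to \cite{ghosh2016calder}; your argument faithfully reproduces the Hahn--Banach duality scheme used there: reduce to showing a dual $F\in L^2(\Omega)$ must vanish, solve the adjoint problem $(-\Delta)^s w+qw=F$ with zero exterior data, integrate against a Runge solution twice so the $\int_\Omega qwv\,dx$ terms cancel, deduce $(-\Delta)^s w=0$ on $W$, and conclude by the strong uniqueness Proposition~\ref{Prop:strong uniqueness}. The bookkeeping you flag as delicate (interpreting $\langle(-\Delta)^s w,v\rangle_{\Omega_e}$ in the $H^s_{\overline U}$--$H^{-s}$ duality and justifying the two Parseval rearrangements) is exactly where the care is needed, and you handle it correctly. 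One small point worth making explicit: both the solvability of the Dirichlet problem defining $\mathcal{R}$ and the solvability of the adjoint problem you denote by \eqref{zero boundary value problem} implicitly require that $0$ not be a Dirichlet eigenvalue of $(-\Delta)^s+q$ on $\Omega$; the statement as written only asks $q\in L^\infty(\Omega)$, but in every application in the paper $q=\partial_z a(x,0)\geq 0$, which makes this automatic.
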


The proofs of Proposition \ref{Prop:strong uniqueness} and \ref{Prop Runge-approximation-property} can be found in \cite{ghosh2016calder}, so we skip the detailed arguments here.
 
We next recall the \emph{maximum principle} for the fractional Schr\"odinger equation, which was investigated by the authors' previous work \cite[Section 3]{lai2019global}.
\begin{prop}[Maximum principle]\label{Prop: max principle}
	Let $\Omega\subset \R^n$, $n\geq 1$ be a bounded domain with Lipschitz boundary $\p\Omega$, and $0<s<1$. Let $q(x)\in L^\infty(\Omega)$ be a nonnegative potential. Let $u\in H^s(\R^n)$ be the unique solution of 
	\begin{align*}
	\begin{cases}
	(-\Delta)^s u + q(x)u=F & \text{ in }\Omega, \\
	u=f &\text{ in }\Omega_e.
	\end{cases}
	\end{align*} 
	Suppose that $0\leq F\in L^\infty(\Omega)$ in $\Omega$ and $0\leq f \in L^\infty(\Omega_e)$ in $\Omega_e$. Then $u\geq 0$ in $\Omega$.
\end{prop}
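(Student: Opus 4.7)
The plan is to test the equation against the negative part $u_-:=\max(-u,0)$ and exploit the sign structure of the fractional quadratic form. Since $u=f\geq 0$ in $\Omega_e$, one has $u_-\equiv 0$ in $\Omega_e$, so $u_-\in \widetilde H^s(\Omega)$ and is therefore an admissible test function for the weak formulation of the Dirichlet problem. Writing $u=u_+ - u_-$ with $u_+ u_-=0$, the zeroth order term gives
\[
\int_\Omega q\, u\, u_-\, dx = -\int_\Omega q\, u_-^2\, dx,
\]
and the right-hand side contribution is $\int_\Omega F u_-\, dx \geq 0$ by the sign assumptions on $F$ and $u_-$.

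The key nonlocal step is to show that
\[
\int_{\R^n} (-\Delta)^{s/2}u \cdot (-\Delta)^{s/2}u_-\, dx \leq -\|(-\Delta)^{s/2}u_-\|_{L^2(\R^n)}^2.
\]
This I would establish by the singular-integral representation of the Dirichlet form: writing the bilinear form as
\[
\tfrac{c_{n,s}}{2}\iint_{\R^n\times\R^n}\frac{(u_+(x)-u_+(y))(u_-(x)-u_-(y))}{|x-y|^{n+2s}}\, dx\, dy - \|(-\Delta)^{s/2}u_-\|_{L^2(\R^n)}^2,
\]
and checking case by case on the signs of $u(x)$ and $u(y)$ that $(u_+(x)-u_+(y))(u_-(x)-u_-(y))\leq 0$ pointwise. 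This is the pointwise analogue of the fact that $\nabla u_+\cdot \nabla u_-=0$ in the local theory, and is what replaces Stampacchia's lemma in the nonlocal setting.

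Combining these pieces yields
\[
0\leq \int_\Omega F\, u_-\, dx = \int_{\R^n}(-\Delta)^{s/2}u\cdot(-\Delta)^{s/2}u_-\, dx + \int_\Omega q\, u\, u_-\, dx \leq -\|(-\Delta)^{s/2}u_-\|_{L^2(\R^n)}^2 - \int_\Omega q\, u_-^2\, dx \leq 0,
\]
forcing $\|(-\Delta)^{s/2}u_-\|_{L^2(\R^n)}=0$. Since $u_-\in \widetilde H^s(\Omega)$ and $\Omega$ is bounded, the fractional Poincar\'e inequality then gives $u_-\equiv 0$, i.e.\ $u\geq 0$ in $\Omega$, completing the argument.

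The main obstacle I anticipate is the rigorous justification of the pointwise sign inequality for the difference quotients and of the fact that $u\mapsto u_-$ preserves the space $\widetilde H^s(\Omega)$ (i.e.\ truncations act boundedly on $H^s$). The first is a clean case analysis, but the second requires the classical fact that Lipschitz truncation with constant $1$ does not increase the $H^s$-seminorm; together with $u_-|_{\Omega_e}=0$, this legitimizes its use as a test function.
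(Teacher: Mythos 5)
Your argument is correct and complete: testing the weak formulation with $u_-\in\widetilde H^s(\Omega)$ (legitimate since $u_-=0$ in $\Omega_e$ and truncation does not increase the Gagliardo seminorm), using the pointwise inequality $(u_+(x)-u_+(y))(u_-(x)-u_-(y))\le 0$, the sign conditions on $q$ and $F$, and the fractional Poincar\'e inequality forces $u_-\equiv 0$. The paper itself does not reproduce a proof of this proposition but defers to \cite[Section 3]{lai2019global}, where the same standard variational truncation argument is used, so your route coincides with the intended one.
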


On top of Proposition~\ref{Prop: max principle}, we are able to prove the \emph{strong maximum principle} for the fractional Schr\"odinger equation and the proof is inspired by \cite[Theorem 2.3.3]{bucur2016nonlocal}.
\begin{prop}[Strong maximum principle]\label{Prop: strong max principle}
	Let $\Omega\subset \R^n$, $n\geq 1$ be a bounded domain with Lipschitz boundary $\p\Omega$, and $0<s<1$. Let $q(x)\in L^\infty(\Omega)$ be a nonnegative potential. Let $u\in H^s(\R^n)$ be the unique solution of 
	\begin{align*}
	\begin{cases}
	(-\Delta)^s u + q(x)u=F & \text{ in }\Omega, \\
	u=f &\text{ in }\Omega_e.
	\end{cases}
	\end{align*} 
	Suppose that $0\leq F\in L^\infty(\Omega)$ in $\Omega$ and $0\leq f \in L^\infty(\Omega_e)$ with $f\not \equiv 0$ in $\Omega_e$. Then $u>0$ in $\Omega$. 
\end{prop}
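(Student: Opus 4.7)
The plan is to argue by contradiction, combining the weak maximum principle (Proposition~\ref{Prop: max principle}) with the pointwise representation of the fractional Laplacian at an interior minimum, following the strategy of \cite{bucur2016nonlocal}.

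First I would apply Proposition~\ref{Prop: max principle} directly to conclude $u\geq 0$ in $\R^n$: indeed $u=f\geq 0$ in $\Omega_e$ by hypothesis, and the weak principle gives $u\geq 0$ in $\Omega$. The aim is then to upgrade this to $u>0$ pointwise inside $\Omega$. Suppose for contradiction there exists $x_0\in\Omega$ with $u(x_0)=0$. Since $u\geq 0$ on all of $\R^n$, the point $x_0$ is a global minimum of $u$.

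Before evaluating $(-\Delta)^s u$ at $x_0$ I would record that $u$ is pointwise defined (in fact continuous) in $\Omega$: from $(-\Delta)^s u = F - qu \in L^\infty(\Omega)$ and $u\in H^s(\R^n)\cap L^\infty(\R^n)$, interior regularity for the fractional Laplacian (e.g.\ the estimates cited from \cite{ros2014dirichlet} used in the proof of Theorem~\ref{Thm:well posedness}) yield $u\in C(\Omega)$. Thus the supposed zero $x_0$ is a genuine pointwise minimum. Using the singular integral representation \eqref{fractional Laplacian}, since the numerator $u(x_0)-u(y)=-u(y)\leq 0$ has a fixed sign the principal value is an absolutely convergent integral and
\begin{equation*}
(-\Delta)^s u(x_0)=c_{n,s}\int_{\R^n}\frac{u(x_0)-u(y)}{|x_0-y|^{n+2s}}\,dy=-c_{n,s}\int_{\R^n}\frac{u(y)}{|x_0-y|^{n+2s}}\,dy\leq 0.
\end{equation*}
On the other hand, the equation together with $u(x_0)=0$ forces
\begin{equation*}
(-\Delta)^s u(x_0)=F(x_0)-q(x_0)u(x_0)=F(x_0)\geq 0.
\end{equation*}

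Combining these two inequalities I get $\int_{\R^n}\frac{u(y)}{|x_0-y|^{n+2s}}\,dy=0$, and since $u\geq 0$ a.e.\ and the weight $|x_0-y|^{-n-2s}$ is strictly positive, this forces $u\equiv 0$ a.e.\ in $\R^n$. In particular $f=u|_{\Omega_e}\equiv 0$, contradicting the assumption $f\not\equiv 0$ in $\Omega_e$. Hence no such $x_0$ exists and $u>0$ throughout $\Omega$. The main technical point I expect to require care is the justification of the pointwise evaluation of $(-\Delta)^s u$ and of $u$ itself at $x_0$; once interior continuity of $u$ is in hand, the sign argument on the singular integral closes the loop cleanly.
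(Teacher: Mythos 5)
Your proposal is correct and follows essentially the same argument as the paper: apply the weak maximum principle to get $u\geq 0$, assume an interior zero $x_0$, and derive a sign contradiction from the pointwise integral representation of $(-\Delta)^s u(x_0)$ together with $F(x_0)\geq 0$, forcing $u\equiv 0$ and contradicting $f\not\equiv 0$. The only cosmetic difference is that the paper uses the symmetric second-difference form of the fractional Laplacian while you use the direct singular integral (noting the fixed sign of the numerator), and you are in fact slightly more careful than the paper in justifying the pointwise evaluation via interior regularity.
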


\begin{proof}
	Via Proposition \ref{Prop: max principle}, we have $u\geq 0$ in $\R^n$. Thus, if $u$ is not positive in $\Omega$, there must exist a point $x_0\in \Omega $ such that $u(x_0)=0$. This implies that 
	\begin{align}\label{proof of SMP}
	\begin{split}
		0&\leq  F(x_0)=(-\Delta)^s u(x_0)+q(x_0)u(x_0) \\
	&=  \frac{c_{n,s}}{2}\int_{\R^n} \frac{2u(x_0)-u(x_0+y)-u(x_0-y)}{|y|^{n+2s}}\, dy \\
	&= - \frac{c_{n,s}}{2}\int_{\R^n} \frac{u(x_0+y)+u(x_0-y)}{|y|^{n+2s}}\, dy\\
    & \leq  0,
	\end{split}
	\end{align}
	where the constant $c_{n,s}>0$ defined in \eqref{c(n,s) constant}. Here we used both $u(x_0+z)$ and $u(x_0-y)$ are nonnegative in $\R^n$ in the last inequality of \eqref{proof of SMP}, and the identity in the second line of \eqref{proof of SMP} stands for the fractional Laplacian (see \cite[Section 3]{di2012hitchhiks}). Therefore, by using \eqref{proof of SMP}, we obtain $u\equiv 0$ in $\R^n$, which contradicts to the assumption that $f\geq 0$ with $f\not \equiv0$ in $\Omega_e$.
\end{proof}

\section{Proof of Theorem \ref{Main Thm 1}}\label{Sec 3}

In this section, we prove Theorem \ref{Main Thm 1} by using the higher order linearization. Before doing so, let us first derive the following useful density result.

\begin{prop}\label{Prop: Density of the products of fractional Laplacian}
	Let $\Omega \subset \R^n$ be a bounded domain with $C^{1,1}$ boundary, $n\geq 1$ and $0<s<1$. Let $q\in L^\infty(\Omega)$ be nonnegative potentials. Let $h\in L^\infty(\Omega)$, if 
	\begin{align}\label{id_density}
 h  v_1\cdots v_J =0\qquad\text{in }\Omega,
	\end{align}
   for any solution $v_j$ to the fractional Schr\"odinger equations 
	\begin{align}\label{fractional Schrodinger equation}
	((-\Delta)^s+q) v_j =0  \qquad\text{in }\Omega 
	\end{align}
	with $ v_j|_{\Omega_e}\in C^\infty_c (W_1)$ for $j=1,\cdots, J$.
	Then $h\equiv 0$ in $\Omega$.
\end{prop}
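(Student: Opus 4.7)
I would prove the proposition by induction on $J$, with the base case $J=1$ being the substantive step and the inductive step a routine reduction. For $J=1$, assume $hv = 0$ a.e.\ in $\Omega$ for every solution $v$ of $((-\Delta)^s + q)v = 0$ in $\Omega$ with $v|_{\Omega_e} \in C^\infty_c(W_1)$. I would set $\Omega_1 := \Omega \cup W_1$, which is open, contains $\Omega$, and satisfies $\mathrm{int}(\Omega_1 \setminus \overline{\Omega}) = W_1 \neq \emptyset$ because $W_1 \Subset \Omega_e$ is open. Proposition~\ref{Prop Runge-approximation-property} then produces, for every $g \in L^2(\Omega)$, approximants $v_\varepsilon \in H^s(\mathbb{R}^n)$ solving the equation with $\mathrm{supp}(v_\varepsilon) \subset \overline{\Omega_1}$ and $v_\varepsilon \to g$ in $L^2(\Omega)$. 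After upgrading the approximating class to admissible solutions (see the last paragraph), multiplication by $h \in L^\infty(\Omega)$ being $L^2$-continuous gives $hg = 0$ for every $g \in L^2(\Omega)$; choosing $g = h$ forces $h^2 \equiv 0$, hence $h \equiv 0$.

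For the inductive step, assuming the conclusion for $J-1$, I would fix arbitrary admissible $v_2, \dots, v_J$. Applying Proposition~\ref{Prop: L infty bound} to the linear equation with $F = 0$ and exterior datum $f = v_j|_{\Omega_e} \in C^\infty_c(W_1) \subset L^\infty(\Omega_e)$ shows that each $v_j$ lies in $L^\infty(\Omega)$, so $\widetilde h := h v_2 \cdots v_J \in L^\infty(\Omega)$. The hypothesis then reads $\widetilde h\, v_1 = 0$ for every admissible $v_1$, so the base case applied to $\widetilde h$ forces $\widetilde h \equiv 0$; since $(v_2, \dots, v_J)$ were arbitrary, the inductive hypothesis yields $h \equiv 0$.

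The main obstacle is a mismatch in the base case: Proposition~\ref{Prop Runge-approximation-property} outputs approximants whose exterior parts lie in $\widetilde H^s(W_1)$ rather than in $C^\infty_c(W_1)$ as the hypothesis demands, so the identity $hv_\varepsilon = 0$ cannot be invoked directly. I would close this gap via the standard duality proof of Runge density restricted to the admissible class: suppose $w \in L^2(\Omega)$ annihilates every admissible solution, and let $\psi \in H^s(\mathbb{R}^n)$ solve the well-posed adjoint problem $((-\Delta)^s + q)\psi = w$ in $\Omega$ with $\psi = 0$ in $\Omega_e$ (well-posed since $q \geq 0$, cf.\ \cite{ghosh2016calder}). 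Green's identity for the fractional Laplacian, using that $\psi$ vanishes in $\Omega_e$ and $((-\Delta)^s + q)v = 0$ in $\Omega$, yields
\[
\int_\Omega w v\,dx \;=\; -\int_{\Omega_e}(-\Delta)^s\psi \cdot f\,dx
\]
for every admissible $v$ with exterior datum $f \in C^\infty_c(W_1)$. Hence $(-\Delta)^s\psi = 0$ in $W_1$; combined with $\psi|_{W_1} = 0$, Proposition~\ref{Prop:strong uniqueness} forces $\psi \equiv 0$, so $w \equiv 0$, upgrading the Runge density to the admissible class and completing the base case.
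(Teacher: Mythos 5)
Your argument is correct, but it is a genuinely different route from the paper's. The paper's own proof is three lines: it exploits the fact that \eqref{id_density} is a \emph{pointwise} identity, so a single well-chosen tuple of solutions suffices — take each exterior datum $f_j\in C^\infty_c(W_1)$ nonnegative and nontrivial, apply the strong maximum principle (Proposition~\ref{Prop: strong max principle}, valid since $q\geq 0$) to get $v_j>0$ throughout $\Omega$, and conclude $h=0$ because the product $v_1\cdots v_J$ is strictly positive. You instead treat the statement as an $L^2$-density problem: you prove (via the standard adjoint/duality argument behind Proposition~\ref{Prop Runge-approximation-property}, using well-posedness of the adjoint problem for $q\geq 0$ and the strong uniqueness Proposition~\ref{Prop:strong uniqueness}) that solutions with exterior data in $C^\infty_c(W_1)$ are dense in $L^2(\Omega)$, settle $J=1$ by taking $g=h$, and induct on $J$ using the $L^\infty$ bound of Proposition~\ref{Prop: L infty bound} to absorb the remaining factors into $\widetilde h$. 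Both proofs are valid and both ultimately rest on the sign condition $q\geq 0$, but through different mechanisms (maximum principle versus well-posedness of the adjoint problem). Your route is longer yet buys something real: it establishes the strictly stronger density fact and would survive situations where the maximum principle fails but the forward problem remains well-posed; the paper's route is shorter and reuses positivity, the same mechanism the authors deploy again in Section~\ref{Sec 4}. Credit is also due for spotting and correctly patching the only delicate point in your approach, namely that Proposition~\ref{Prop Runge-approximation-property} as stated produces approximants whose exterior parts need not lie in $C^\infty_c(W_1)$, so the density must be re-derived for the admissible class before the hypothesis $hv=0$ can be passed to the limit.
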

\begin{proof}
By choosing suitable exterior data $f_j\in C^\infty_c (W_1)$ of the solutions $v_j$ to the fractional Schr\"odinger equation \eqref{fractional Schrodinger equation} so that $f_j>0$ in some nonempty open subset of $W_1$. By the strong maximum principle (Proposition \ref{Prop: strong max principle}), we have that $v_j>0$ in $\Omega$ and thus $v_1\cdots v_J>0$.
Therefore, the function $h$ in \eqref{id_density} must vanish in $\Omega$.
\end{proof}

\begin{rmk} 
Regarding the condition $\p_za(x,0)\geq 0$ and the above density result, we make the following remarks.	
	\begin{itemize}
		
		\item[(a)] The sign condition $\p_za(x,0)\geq 0$ holds the key to the density result in Proposition~\ref{Prop: Density of the products of fractional Laplacian} due to the strong maximum principle (Proposition \ref{Prop: strong max principle}). 
		
		

		\item[(b)] In the local counterpart ($s=1$), the condition $\p_z a(x,0)=0$ is necessary to study the partial data problem of simultaneously recovering the obstacle and the coefficient, see for instance, \cite{KU201909, LLLS2019partial}. The reason is as follows. The problem is in general related to show that if
		\[
		\int_\Omega h uv \, dx=0
		\]
		for any solutions $u,v$  to the equation  $-\Delta u= -\Delta v=0$ in $\Omega$ with $u=v=0$ in a nonempty closed proper subset of $\p \Omega$, then can one conclude that $h=0$ in $\Omega$? This is valid due to the result in \cite{ferreira2009linearized}, where the authors showed that the set of products of such harmonic function $u$ and $v$ is dense in $L^1$ space and hence $h$ must vanish in $\Omega$.
		We would like to note that if $\p_z a(x,0)\neq0$, then this density result is still open: show that
		\[
		\int_\Omega h uv \, dx=0
		\]
		for any solutions $u,v$  to the Schr\"odinger equation $(-\Delta+q)u=(-\Delta+q)v=0$ in $\Omega$ with $u=v=0$ in a nonempty closed proper subset of $\p \Omega$,
		implies that $h=0$ in $\Omega$.
		
		\item[(c)] From $(a)$ and $(b)$, we have seen that the fractional Laplacian $(-\Delta)^s, 0<s<1$ has \emph{better} approximation property than the classical Laplacian $(-\Delta)$. On the other hand, regarding the regularity of their solutions, the classical Laplacian indeed has \emph{better} global regularity estimates  and, in particular, the main difference between solutions of $(-\Delta)$ and $(-\Delta)^s$ is their boundary regularity.
	
	\end{itemize} 
\end{rmk}



Moreover, we have the following observation.
\begin{prop}\label{prop:induction}
Let $\Omega \subset \R^n$ be a bounded domain with $C^{1,1}$ boundary, $n\geq 1$ and $0<s<1$.
Let $\eps$ be a small parameter and let $f \in C_c^\infty (W_1)$. Let $u_j$ be the solution to the exterior problem
\begin{align}\label{Dirichlet problem in Section 3}
\begin{cases}
(-\Delta)^s u_j + a_j (x, u_j)=0 & \text{ in }\Omega, \\
u_j = \eps  f & \text{ in }\Omega_e,
\end{cases}
\end{align}
for any $|\eps|$ sufficiently small for $j=1,2$. 
Suppose that 
\begin{align}\label{prop:a12}
\left.\Lambda_{a_1}(f)\right|_{W_2} = \left.\Lambda_{a_2}(f)\right|_{W_2}\qquad \text{ for all }f\in C^\infty_c(W_1).
\end{align}
Then 
\begin{equation}\label{prop:subinduction}
    u_1^{(k)}(x)= u_2^{(k)}(x)\qquad \text{ in }\R^n \qquad \text{ for all }k\in \N,
\end{equation}
where we used the following abbreviation
$$ \p^k_\eps u_j (x;\eps):=\frac{\p^{k} u_j}{\p \eps^k}(x;\eps),$$
and in particular, when $\eps=0$, we denote
$$
  u^{(k)}_j(x) := \p^k_\eps u_j (x;0).
$$
\end{prop}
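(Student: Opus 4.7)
The key observation is that the proposition requires only that the $k$-th order linearizations agree in $\mathbb R^n$, not that we explicitly identify the PDEs they satisfy. Hence I do not need to linearize \eqref{Dirichlet problem in Section 3} termwise or to invoke Fa\`a di Bruno; a direct argument built on strong uniqueness (Proposition~\ref{Prop:strong uniqueness}) suffices.

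\emph{Step 1: smoothness of the solution in $\eps$.} First I would upgrade Theorem~\ref{Thm:well posedness} to show that the solution map $\eps \mapsto u_j(\cdot;\eps) \in C^s(\mathbb R^n)$ is real analytic in a neighborhood of $\eps=0$. This follows from reading the contraction argument of Theorem~\ref{Thm:well posedness} with $\eps$ as a parameter: the base linear solution $u_0$ depends linearly on $\eps f$, the operator $G$ depends holomorphically on its argument by the holomorphy of $a_j$ in $z$ in \eqref{condition a} together with the Cauchy estimates \eqref{Cauchy estimate}, and the contraction constant can be chosen uniformly for small $\eps$. A parameter version of the contraction mapping principle then produces a family $u_j(\cdot;\eps) \in C^s(\mathbb R^n) \cap H^s(\mathbb R^n)$ depending analytically on $\eps$. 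In particular, all derivatives $u_j^{(k)} = \partial_\eps^k u_j(\cdot;\eps)|_{\eps=0}$ exist in $C^s(\mathbb R^n) \cap H^s(\mathbb R^n)$, and $\partial_\eps^k$ commutes with $(-\Delta)^s$ and with restriction to an open subset of $\Omega_e$.

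\emph{Step 2: matching exterior and Neumann data.} Because $a_j(x,0)=0$, the zero function solves \eqref{Dirichlet problem in Section 3} at $\eps=0$, so uniqueness in Theorem~\ref{Thm:well posedness} forces $u_j(\cdot;0)\equiv 0$. Differentiating the exterior condition $u_j|_{\Omega_e} = \eps f$ in $\eps$ gives $u_1^{(1)}|_{\Omega_e} = u_2^{(1)}|_{\Omega_e} = f$ and $u_j^{(k)}|_{\Omega_e} = 0$ for every $k\ge 2$. In either case, the difference $w := u_1^{(k)} - u_2^{(k)}$ vanishes on all of $\Omega_e$, and in particular on $W_2$. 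On the Neumann side, Proposition~\ref{prop:DNmap} identifies $\Lambda_{a_j}(\eps f)$ with $(-\Delta)^s u_j(\cdot;\eps)|_{\Omega_e}$. Differentiating the hypothesis \eqref{prop:a12} $k$ times at $\eps=0$ and using the commutation from Step~1 yields
\[
(-\Delta)^s u_1^{(k)} = (-\Delta)^s u_2^{(k)} \qquad \text{in } W_2,
\]
i.e.\ $(-\Delta)^s w = 0$ in $W_2$.

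\emph{Step 3: strong uniqueness.} Since $w \in H^s(\mathbb R^n)$ satisfies $w = 0$ and $(-\Delta)^s w = 0$ on the nonempty open set $W_2 \subset \mathbb R^n$, Proposition~\ref{Prop:strong uniqueness} immediately gives $w \equiv 0$ in $\mathbb R^n$, which is exactly \eqref{prop:subinduction}. The routine DN-map bookkeeping in Step~2 and the invocation of Proposition~\ref{Prop:strong uniqueness} in Step~3 are straightforward; the technical heart of the proof is Step~1, namely, promoting the single-datum contraction of Theorem~\ref{Thm:well posedness} to an analytic dependence on $\eps$ so that $u_j^{(k)}$ are well-defined objects in a function space where Proposition~\ref{Prop:strong uniqueness} and the distributional identities of Proposition~\ref{prop:DNmap} apply.
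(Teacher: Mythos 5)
Your proof is correct and rests on the same core mechanism as the paper's: differentiate the DN-map identity at $\eps=0$ to match the Neumann data of $u_1^{(k)}$ and $u_2^{(k)}$ on $W_2$, observe that their exterior data coincide on all of $\Omega_e$, and conclude via the strong uniqueness property of Proposition~\ref{Prop:strong uniqueness}. The only differences are minor: the paper writes out the linearized interior equations at each order (which, as you correctly note, are not needed for this proposition itself but only for the subsequent proof of Theorem~\ref{Main Thm 1}), while you add an explicit justification of the analytic dependence of $u_j(\cdot;\eps)$ on $\eps$ --- a point the paper differentiates through formally without comment.
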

\begin{proof}
We will show \eqref{prop:subinduction} from $k=1,2$ up to $N+1$, respectively. 
Let us begin with the case $k=1$ by applying the first order linearization.
We differentiate \eqref{Dirichlet problem in Section 3} with respect to $\epsilon$, then 
\begin{align}\label{Prop:equ2a1 in 1st example}
\begin{cases}
(-\Delta)^s \left(\p_{\eps}u_j\right) + \p_{z} a_j (x,u_j)\left(\p_{\eps}u_j\right)=0 & \text{ in }\Omega, \\
\p_{\eps }u_j = f  &\text{ in } \Omega_e.
\end{cases}
\end{align}
Setting $\epsilon=0$ in \eqref{Prop:equ2a1 in 1st example}, we then have that 
\begin{align}\label{Prop:equation of 1st linearization in thm 1}
\begin{cases}
(-\Delta)^s u_j^{(1)}  +\p_za_j(x,0)u_j^{(1)} =0 & \text{ in }\Omega,\\
u_j ^{(1)} =f &\text{ in }\Omega_e,
\end{cases}
\end{align}
Here we have used the well-posedness of \eqref{Dirichlet problem in Section 3} so that $u_j(x;0)\equiv 0$ in $\Omega$ so that $\p_{z }a_j(x,u_j(x;0))=\p_za_j(x,0)$ in $\Omega$. 

By the hypothesis \eqref{prop:a12}, we have $(-\Delta)^s u_1|_{W_2} = (-\Delta)^su_2|_{W_2}$ that implies 
$$(-\Delta)^s u_1^{(1)} =(-\Delta)^s u_2^{(1)} \qquad\text{ in }W_2.$$
Combining it with the boundary condition $u_1^{(1)}=u_2	^{(1)}=f $ in $\Omega_e$, by the strong uniqueness result in Proposition~\ref{Prop:strong uniqueness}, we obtain
\begin{align}\label{Prop:v_1 =v_2a1 Rn}
u^{(1)}:=u_1^{(1)}=u_2^{(1)}\qquad\text{ in } \R^n.
\end{align}

Next to show \eqref{prop:subinduction} is valid for $k=2$, let's differentiate \eqref{Dirichlet problem in Section 3} twice with respect to small parameters $\epsilon$, and then setting $\eps=0$ yields
	\begin{align}\label{equ 4a1 in 1st example}
	\begin{cases}
	(-\Delta)^s u_j^{(2)}+ \p_{z}a_j (x,0)u_j^{(2)}+ \p ^2_{z}a_j(x,0)u^{(1)} u^{(1)}=0 & \text{ in }\Omega, \\
	u_j^{(2)}=0 & \text{ in }\Omega_e,
	\end{cases}
	\end{align}
	where we used the fact that $u_j(x;0)\equiv 0$ in $\Omega$ for $j=1,2$.
Similarly, by applying the hypothesis \eqref{prop:a12} again, we can derive that 
$$(-\Delta)^s u_1^{(2)}=(-\Delta)^s u_2^{(2)}\qquad\text{ in }W_2.$$
Moreover, we have the boundary condition $u_1^{(2)}=u_2^{(2)} =0$ in $\Omega_e$, then we obtain
\begin{align}\label{Prop:v_1 =v_2a1 Rn}
u_1^{(2)}=u_2^{(2)} \qquad\text{ in } \R^n.
\end{align}
by applying Proposition~\ref{Prop:strong uniqueness} again.

So far we have shown the case from $k=1$ to $k=2$ in \eqref{prop:subinduction}. For general case, for any $N\in \N$ we can also show 
\eqref{prop:subinduction} holds with $k=N$ along the similar argument as above.
	
Performing $N^{th}$ linearization of \eqref{Dirichlet problem in Section 3} at $\eps=0$ yields
	\begin{align}\label{equ 7a1 in 1st example}
	\begin{split}
&	(-\Delta)^s u_j^{(N)} + \p_z a(x,0)u_j^{(N)} \\
	&\hskip1.5cm + R_{N-1}(u_j,a_j)  
    + \p_z^{N} a_j(x,0)\left( u^{(1)}\right)^N=0 \qquad\text{ in }\Omega, 
	\end{split}
	\end{align}
with boundary data 
\begin{align}\label{bdryK}
u_1^{(N)}=u_2^{(N)}=0 \qquad  \hbox{ in }\Omega_e,
\end{align}
where $R_{N-1}(u_j,a_j)$ stands for a polynomial consisting of the functions $\p_z^{\beta}a_j(x,0)$ for $2\leq\beta\leq N-1$ and $u_j^{(k)}(x)$ for all $1\leq k\leq N-1$.

Following the same argument as the case $k=1,2$ above, the same DN maps gives $(-\Delta)^su_1^{(N)}=(-\Delta)^su_2^{(N)}$ in $W_2$ and then combining it with their boundary data \eqref{bdryK}, the strong uniqueness principle yields $u_1^{(N)}=u_2^{(N)}$ in $\R^n$ for any $N\in \N$. This completes the proof.
\end{proof}

Equipped with Proposition \ref{Prop: Density of the products of fractional Laplacian} and Proposition~\ref{prop:induction}, we are ready to show Theorem \ref{Main Thm 1}.

\begin{proof}[Proof of Theorem \ref{Main Thm 1}]	

It suffices to prove that for any fixed $N\in\N$,
\begin{align}\label{claim of Theorem 1}
\p_z ^\beta a_1(x,0 )=\p _z^\beta a_2(x,0), \quad \text{ for any }\beta\leq N+1.
\end{align}

Let $\eps$ be a small parameter and let $f\in C_c^\infty (W_1)$ be a nontrivial function satisfying $f\geq 0$,
and $u_j$ be the solution to the exterior problem
\begin{align}\label{Dirichlet problem in Section 3_1}
\begin{cases}
(-\Delta)^s u_j + a_j (x, u_j)=0 & \text{ in }\Omega, \\
u_j =\eps f & \text{ in }\Omega_e,
\end{cases}
\end{align}
for any $|\eps|$ sufficiently small for $j=1,2$. By Proposition~\ref{prop:induction}, the knowledge of the DN map $\Lambda_a$ yields \begin{align}\label{thm:subinduction}
     u^{(k)}:= u_1^{(k)}(x)= u_2^{(k)}(x)\qquad \text{ in }\R^n\ \hbox{ for all }k\in\N.
\end{align}

We apply the induction argument to show the uniqueness of the coefficient $a$ below.
\vspace{3mm}

 $\bullet$ {\it $1^{st}$ step: $\beta=1$}. 

\vspace{3mm}

 \noindent To this end, we first prove that $\beta=1$, that is,
\begin{align*}
\p_za_1(x,0)=\p_za_2(x,0).
\end{align*}
Recall that $u_j^{(1)}$ is the solution to \eqref{Prop:equation of 1st linearization in thm 1} and from \eqref{thm:subinduction}, we have $ u^{(1)}:= u_1^{(1)}(x)= u_2^{(1)}(x)$. Subtracting \eqref{Prop:equation of 1st linearization in thm 1} with $j=1$ from \eqref{Prop:equation of 1st linearization in thm 1} with $j=2$, one has
$$
   	\LC\p_z a_2(x,0)-\p_za_1(x,0) \RC u^{(1)}=0 \qquad\hbox{ in }\Omega.
$$
Based on the suitable chosen boundary condition $f\geq 0$ and \eqref{condition a}, we can apply Proposition~\ref{Prop: Density of the products of fractional Laplacian} 
to derive that $\p_z a_1(x,0)=\p_za_2(x,0)$.
 
In order to make the idea of using induction argument clearer, we will prove that 
$\p_z^2 a_1(x,0)=\p_z^2 a_2(x,0)$ in detail as follows.

\vspace{3mm}

$\bullet$ {\it $2^{nd}$ step: $\beta=2$.} 	
\vspace{3mm}

\noindent  To show $\p_z^2a_1(x,0)=\p_z^2a_2(x,0)$, as in the proof of Proposition~\ref{prop:induction}, let's differentiate \eqref{Dirichlet problem in Section 3_1} twice with respect to small parameters $\epsilon$, and then setting $\eps=0$ yields \eqref{equ 4a1 in 1st example}. 
In particular, by Proposition~\ref{prop:induction}, we know $u_1^{(2)}=u_2^{(2)}$ in $\R^n$, together with $\p_za_1(x,0)=\p_za_2(x,0)$ obtained in $1^{st}$ step, then \eqref{equ 4a1 in 1st example} implies
$$
    \left(\p^2 _za_2(x,0)-\p^2 _z a_1(x,0)\right)  u^{(1)}u^{(1)}=0  \qquad\hbox{ in }\Omega.
$$
where $u^{(1)}$ is the solution to \eqref{Prop:equation of 1st linearization in thm 1}. By Proposition~\ref{Prop: Density of the products of fractional Laplacian}, 
we obtain 
	$$\p^2 _za_2(x,0)=\p^2 _z a_1(x,0)\qquad \hbox{ in }\Omega. $$ 

\vspace{3mm}

$\bullet$ {\it $3^{rd}$ step: $\beta=N+1$.} 	

\vspace{3mm}

\noindent By the induction argument, for a fixed integer $1<N\in \N$, let's assume that 
\begin{equation}\label{inducition_assumption}
\p_z^\beta a_1(x,0)=\p_z^\beta a_2(x,0) \qquad \text{ for all }\beta=1,2, \cdots, N
\end{equation}
is valid. 
It remains to show this identity holds when $\beta=N+1$.

We then perform the linearization of order $N+1$ on \eqref{Dirichlet problem in Section 3_1} at $\eps=0$ yields 
\begin{align}\label{equ 7a1 in 1st examplea}
\begin{split}
&	(-\Delta)^s \left(u_j^{(N+1)}(x,0) \right) + \p_z a(x,0) u_j^{(N+1)}(x,0) \\
&\hskip1.5cm + R_{N}(u_j,a_j)  
+ \p_z^{N+1} a_j(x,0)\left( u^{(1)} \right)^{N+1}=0 \qquad\text{ in }\Omega,
\end{split}
\end{align}
where $R_{N}(u_j,a_j)$ stands for a polynomial consisting of the functions $\p_z^{\beta}a_j(x,0)$ for $2\leq\beta\leq N$ and $ u_j^{(k)}(x)$ for all $1\leq k\leq N$. Then $$R_{N}(u_1,a_1)=R_{N}(u_2,a_2)$$ due to \eqref{thm:subinduction} and \eqref{inducition_assumption}.

Following a similar argument as discussed in the second step above, 
by subtracting the equations \eqref{equ 7a1 in 1st examplea} with $j=1$ from the equation with $j=2$, then one has 
\begin{align*}
 \left(\p_z^{N+1} a_1(x,0)-\p_z^{N+1} a_2(x,0) \right) \left(u^{(1)} \right)^{N+1} =0,
\end{align*}
Finally, we use Proposition~\ref{Prop: Density of the products of fractional Laplacian} again to conclude that $\p_z^{N+1} a_1(x,0)=\p_z^{N+1} a_2(x,0) $ in $\Omega$, which proves the claim \eqref{claim of Theorem 1} for the case $\beta = N+1$. The last step to prove Theorem \ref{Main Thm 1} is via the condition \eqref{condition a} and the Taylor expansion of $a_1(x,z)$ and $a_2(x,z)$. This completes the proof.
\end{proof}

 
\section{Proof of Theorem \ref{Thm: Nonlinear nonlocal Schiffer's problem}}\label{Sec 4}
In this section we will show Theorem \ref{Thm: Nonlinear nonlocal Schiffer's problem}. The strategy is first to recover the obstacle $D$ from the first linearization of the equation
\[
(-\Delta)^su(x)+a(x,u)=0.
\]
Once $D$ is determined, the next step is to apply the similar arguments as in Section~\ref{Sec 3} to reconstruct the nonlinearity $a=a(x,z)$. 

\begin{proof}[Proof of Theorem \ref{Thm: Nonlinear nonlocal Schiffer's problem}]
	Let $f\in C^\infty_c(W_1)$ and $\eps>0$ small, then Theorem~\ref{Thm:well posedness} shows that there exists a unique solution $u_j(x)=u_j(x;\epsilon)$ in $C^s(\overline\Omega\setminus D_j)$ to the problem
	\begin{align}\label{Schiffer equation proof}
	\begin{cases}
	(-\Delta)^s u_j +a_j (x,u_j)=0 & \text{ in }\Omega \setminus \overline{D_j}, \\
	u_j =0 & \text{ in } D_j,\\
	u_j =\eps f & \text{ in } \Omega_e,
	\end{cases}
	\end{align}
	for $j=1,2$.
	
	\vspace{10pt}
	
	$\bullet$ {\it $1^{st}$ step: Recovering the obstacle.}\\
	
	\noindent  By differentiating \eqref{Schiffer equation proof} with respect to $\eps$ and setting $\eps=0$, one obtains 
	\begin{align}\label{one-cavity problem}
	\begin{cases}
	(-\Delta)^s u_j^{(1)}+\p_z a_j (x,0) u_j^{(1)}=0 & \text{ in }\Omega \setminus \overline{D_j},\\
    u_j^{(1)} =0 & \text{ in }D_j, \\
	u_j^{(1)} =f & \text{ in } \Omega_e,
	\end{cases}
	\end{align}
	where we use the same notation $u_j^{(1)}$ to denote 
	$$u_j^{(1)}(x):= \p_{\eps}\big|_{\epsilon =0}u_j $$
	and the fact that $u_j(x;0)\equiv 0 $ in $\Omega \setminus \overline{D_j}$ due to the well-posed result in Theorem~\ref{Thm:well posedness} for $j =1,2$.

	Making preparation to show $D_1=D_2$, we first prove that $u_1^{(1)}\equiv u_2^{(1)}$ in $\R^n$. 
From the DN map condition $\Lambda_{a_1}^{D_1}(f)=\Lambda_{a_2}^{D_2}(f)$ in $W_2\subset \Omega_e$, it yields that $(-\Delta)^s u_1 =(-\Delta)^s u_2$ in $W_2$ and then by performing the first linearization on these DN map, it leads to 
$(-\Delta)^s u_1^{(1)} =(-\Delta)^s u_2^{(1)}$ in $W_2$. Combining with the boundary conditions $u_1^{(1)}= u_2^{(2)}=f$ in $\Omega_e$, 
the strong uniqueness of the fractional Laplacian (see Proposition \ref{Prop:strong uniqueness}) implies that
\begin{align}\label{beta =1 in obstacle}
	u_1^{(1)}\equiv u_2^{(1)}\quad  \text{ in }\R^n.
\end{align}

Equipped with $u_1^{(1)}\equiv u_2^{(1)}$ in $\R^n$, we are ready to show $D_1=D_2$.
We apply a contradiction argument by assuming that $D_1 \neq D_2$.
Without	loss of generality, let us assume $f\neq 0$ and there exists a nonempty open subset $M\Subset D_{2}\backslash\overline{D_{1}}$.	
	By using $u_2^{(1)}=0$ in $D_2$ and the result showed above $u_1^{(1)}= u_2^{(1)}$ in $\R^{n}$, we get
	that 
	\begin{align}\label{thm2.1,v1}
	u_1^{(1)}= u_2^{(1)}=0\qquad \hbox{ in }M\Subset D_{2}\backslash\overline{D_{1}}. 
	\end{align} 
    By applying \eqref{thm2.1,v1} and the equation \eqref{one-cavity problem} with $j=1$, it is readily seen that 
	\begin{align}\label{thm2.1,v2}
    (-\Delta)^s u_1^{(1)} =-\p_z a_j(x,0) u_1^{(1)}=0 \qquad \hbox{ in }M\Subset \Omega\backslash\overline{D_{1}}. 
	\end{align}
	With \eqref{thm2.1,v1} and \eqref{thm2.1,v2}, the strong uniqueness property implies that $u_1^{(1)}\equiv0$
	in $\mathbb{R}^{n}$, which contradicts to the assumptions that $u_1^{(1)}=f\neq 0$ in $\Omega_e$. 
	Therefore, we obtain the uniqueness of the obstacle, namely,
	\begin{align*}
		D:=D_1=D_2 \Subset\Omega.
	\end{align*}
	A similar argument can be found in \cite{CLL2017simultaneously} for the linear fractional Schr\"odinger equation.
	
		\vspace{10pt}

	$\bullet$ {\it $2^{nd}$ step: Recovering the coefficient.}\\

	\noindent We note that the determination of the coefficient for the inverse obstacle problem here can be derived by following the same argument as in the proof of Theorem~\ref{Main Thm 1} with $\Omega$ replaced by $\Omega\setminus \overline{D}$. Instead of directly applying the proof of Theorem~\ref{Main Thm 1}, we take a slightly different approach to this problem by using the Runge approximation (Proposition~\ref{Prop Runge-approximation-property}) as follows. 

	To finish the proof, we only need to show the claim
	\begin{equation}\label{schiffer_claim}
	\p _z^\beta a_1(x,0) = \p _z^\beta a_2(x,0), \qquad \beta \in \N
	\end{equation}
	to hold. Let us proceed by applying the induction argument. For the case $\beta=1$, one can use the first linearization as in the first step, and substitute $D:=D_1=D_2$ into \eqref{one-cavity problem} to obtain  
	\begin{align*}
	\begin{cases}
	(-\Delta)^s u_j^{(1)}+\p_z a_j (x,0) u_j^{(1)}=0 & \text{ in }\Omega \setminus \overline{D},\\
	u_j^{(1)} =0 & \text{ in }D, \\
	u_j^{(1)} =f & \text{ in } \Omega_e.
	\end{cases}
	\end{align*}
	Next, let us apply the results from the global uniqueness result \cite[Section 5]{CLL2017simultaneously}, then we have 
	$\p_za(x,0):=\p_z a_1(x,0)=\p_za_2(x,0)$, for $x\in \Omega\setminus \overline{D}$, which proves the claim \eqref{schiffer_claim} as $\beta =1$.

	By induction, we assume that~\eqref{schiffer_claim} holds for $\beta\leq N$. Then we want to prove that \eqref{schiffer_claim} is valid for $\beta=N+1$. We recall that from Proposition~\ref{prop:induction}, we have
	\begin{align}\label{derivs up to N}
	u^{(k)}(x):=\p^{k}_{\eps}u_1(x;0)=\p^{k}_{\eps}u_2(x;0) \text{ in }\Omega\setminus \overline{D}, \quad \text{ for all }k=1,2, \cdots, N.
	\end{align}
	Even though Proposition~\ref{prop:induction} yields \eqref{derivs up to N} holds for $\beta\geq N+1$, for the argument below we do not need the information of $\p^{\beta}_{\eps}u_j$ with order $\beta\geq N+1$.

	Let us differentiate the equation~\eqref{Schiffer equation proof} $(N+1)$ times with respect to the small parameters $\eps$ for $j=1,2$, and subtract the resulting equation with $j=2$ from the one with $j=1$, one gets
	\begin{align}\label{equ 7 in 2nd example 1}
	\begin{split}
		&  (-\Delta)^s \LC u_1^{(N+1)} - u_2^{(N+1)} \RC + \p_z a(x,0) \left(u_1^{(N+1)} -u_2^{(N+1)} \right) \\
	& \qquad + \p_z^{N+1} \left(a_1(x,0)-a_2(x,0)\right)\left( u^{(1)}\right)^{N+1}=0 \qquad\text{ in }\Omega\setminus \overline{D}.
	\end{split}
	\end{align}
	Note that in the derivation of \eqref{equ 7 in 2nd example 1}, we used the assumption~\eqref{schiffer_claim} for $\beta \leq N$ and~\eqref{derivs up to N} to deduce that the terms with derivatives of order up to $N$ (that is, $R_{N}(u_1,a_1)-R_{N}(u_2,a_2)=0$) vanish in the subtraction. We also have the boundary data $u_1^{(N+1)} -u_2^{(N+1)}=0$ in $\Omega_e \cup  D$ and 
	\begin{align}\label{equ 7 in 2nd example}
	(-\Delta)^s u_1^{(N+1)} -(-\Delta)^s u_2^{(N+1)}=0\qquad\text{ in }W_2
	\end{align}
    via similar arguments as in Proposition \ref{prop:induction}.
    
	Note that the DN map only given in $\Omega_e$, but not in $ D$. Therefore integrating~\eqref{equ 7 in 2nd example} and using integration by parts would produce an unknown integral over $D$. Inspired by a proof in \cite[Theorem 1.2]{LLLS2019partial}, we need to compensate for the lack of the information in $D$ when performing an integration by parts. Let us consider a solution $v^{(0)}$ to the fractional Schr\"odinger equation 
	\begin{align}\label{harmonic_function_partial_data}
	\begin{cases}
	(-\Delta)^s v^{(0)}+\p _z a(x,0)v^{(0)}=0 & \text{ in }\Omega \setminus \overline{D}, \\
	v^{(0)}=0 & \text{ in } D, \\
	v^{(0)}=\eta &\text{ in } \Omega_e,
	\end{cases}
	\end{align}
	where $\eta\in C^\infty_c(W_2)$ with $\eta \geq 0$ and $\eta \not \equiv 0$.

	By the strong maximum principle of the fractional Laplacian (see Proposition \ref{Prop: strong max principle} for the case $q(x)=\p_za(x,0)\geq 0$) and by the preceding arguments for the positivity of solutions, we must have that $v^{(0)}>0$ in $\Omega \setminus \overline{D}$.
	Multiplying the equation \eqref{equ 7 in 2nd example} by this positive solution $v^{(0)}$, and then integrating the resulting equation, we have  
	\begin{align}\label{integral id for generalaa 1}
	\begin{split}
	0&=   \int_{\Omega_e \cup D}v^{(0)}(-\Delta)^s \left(u_2^{(N+1)} -u_1^{(N+1)} \right)dx \\
	&= \int_{\R^n}v^{(0)}(-\Delta)^s  \left(u_2^{(N+1)}-u_1^{(N+1)} \right)dx\\
	&\quad +\int_{\Omega}v^{(0)}\p_z a(x,0) \left(u_2^{(N+1)}-u_1^{(N+1)}\right) dx\\
	   &\quad + \int_{\Omega}  v^{(0)} \p_z^{N+1} \left(a_2(x,0)-a_1(x,0)\right)\left(u^{(1)}\right)^{N+1} dx,
	\end{split}
	\end{align}
	where we used $u_j^{(N+1)}$ is the solution to \eqref{equ 7 in 2nd example 1}.
	Due to the boundary data $ u_1^{(N+1)} = u_2^{(N+1)}=0$ in $D\cup \Omega_e$ and the equation $(-\Delta)^s v^{(0)}+\p_za(x,0)v^{(0)}=0$ in $\Omega\setminus \overline{D}$, the first two terms on the right hand side of \eqref{integral id for generalaa 1} become
	\begin{align*}
		&\hskip.45cm \int_{\R^n}v^{(0)}(-\Delta)^s  \left(u_2^{(N+1)}-u_1^{(N+1)}\right)dx \\
		&\quad +\int_{\Omega}v^{(0)}\p_z a(x,0)  \left(u_2^{(N+1)}-u_1^{(N+1)}\right) dx\\
		&= \int_{\R^n}  \left(u_2^{(N+1)}-u_1^{(N+1)}\right)(-\Delta)^sv^{(0)}\, dx \\
		&\quad +\int_{\Omega}v^{(0)}\p_z a(x,0) \left(u_2^{(N+1)}-u_1^{(N+1)}\right) dx\\
		&= \int_{\Omega_e \cup D}  \left(u_2^{(N+1)}-u_1^{(N+1)}\right)(-\Delta)^sv^{(0)}\, dx\\
		&= 0.
	\end{align*}
Thus, \eqref{integral id for generalaa 1} becomes
	\begin{align}\label{integral id for generalaa}
	\begin{split}
	0 = \int_{\Omega\setminus\overline{D}}\p_z^{N+1} \left( a_2(x,0)-a_1(x,0) \right)\left(u^{(1)} \right)^{N+1}v^{(0)}\,dx.
	\end{split}
	\end{align}

	Finally, with \eqref{integral id for generalaa}, by	applying \cite[Lemma 5.1]{CLL2017simultaneously} (an analogous version Runge approximation of Proposition~\ref{Prop Runge-approximation-property} in the domain $\Omega \setminus \overline{D}$), for any $g\in L^2(\Omega)$,
	there exists a sequence $(v^{(0)}_{m})_{m\in\N}$ in $H^s(\R^n)$ so that 
	\begin{align*}
	\begin{cases}
	(-\Delta)^s v^{(0)}_{m}+\p _z a(x,0) v^{(0)}_{m}=0  \text{ in }\Omega\setminus \overline{D},\\
	v^{(0)}_{m}=0 \text{ in }D,\\
	v^{(0)}_{m}   \ \ \hbox{has exterior values in $C^\infty_c(W_2)$},\\ 
	v^{(0)}_{m}=g +r^{(0)}_{m}, 
	\end{cases}
	\end{align*}
	where $r^{(0)}_{m}$ converges to $0$ in $L^2(\Omega\setminus \overline{D})$ as $m\to \infty$. Then $v^{(0)}_{m}$ converges to $g$ in $L^2(\Omega\setminus \overline{D})$ as $m\to \infty$. We substitute the solutions $v^{(0)}$ by $v^{(0)}_{m}$ into the integral identity \eqref{integral id for generalaa}, and then take the limit as $m\to \infty$ so that we have 
	\[
     \int_{\Omega\setminus \overline{D}}\LC\p_z^{N+1}  a_1(x,0)-\p_z^{N+1} a_2(x,0)\RC\left( u^{(1)}\right)^{N+1} g\,	dx =0\qquad\hbox{ in }\Omega\setminus \overline{D}.
	\]
Since $g$ is arbitrary, we further obtain the following identity
	\[
 \LC\p_z^{N+1}  a_1(x,0)-\p_z^{N+1} a_2(x,0)\RC\left( u^{(1)}\right)^{N+1}	=0\qquad\hbox{ in }\Omega\setminus \overline{D}.
\]
Likewise, by choosing the exterior data $f\geq 0$ in $W_1$ but $f\not \equiv 0$ in $W_1$ so that $u^{(1)}>0$ in $\Omega \setminus \overline{D}$, it leads to $\p_z^{N+1}  a_1(x,0)=\p_z^{N+1} a_2(x,0)$, see also Proposition~\ref{Prop: Density of the products of fractional Laplacian}. Thus the claim \eqref{schiffer_claim} follows by the induction argument. The last step to prove Theorem \ref{Thm: Nonlinear nonlocal Schiffer's problem} is via \eqref{schiffer_claim} and the Taylor expansion of $a_1(x,z)$ and $a_2(x,z)$ in $\Omega \setminus \overline{D}$. This finishes the proof.
\end{proof}


\section{Single measurement approach}\label{Sec 5}

In the reminder of this paper, we present a single measurement approach for both inverse coefficient problem (Theorem \ref{Main Thm 1}) and inverse obstacle problem (Theorem \ref{Thm: Nonlinear nonlocal Schiffer's problem}). 
Similar results for inverse problems for fractional equations were investigated by \cite{GRSU18} for the fractional Schr\"odinger equation with a single measurement, and \cite{cekic2020calderon} for the fractional Schr\"odinger equation with drift with finitely many measurements.

The uniqueness result obtained in this section, namely, 
\begin{align}\label{a single data}
    	a_1(x,u(x))=a_2(x,u(x)) \qquad \text{ for all $x$ in the domain},
\end{align}
indeed relies on much fewer data (one measurement) and a shorter argument than those presented in Section~\ref{Sec 3} and Section~\ref{Sec 4}. While in Theorem~\ref{Main Thm 1} and Theorem~\ref{Thm: Nonlinear nonlocal Schiffer's problem}, 
benefiting from the condition \eqref{condition a} and the application of the higher order linearization, one can further recover every single term $\p_z^ka(x,0)$ in the Taylor series of $a$ such that $a(x,z)$ is determined for all $z\in\R$. 
This could be viewed as more informative than \eqref{a single data} in the single measurement case.

Let us state the global uniqueness with one measurement as follows.

\begin{thm}[Global uniqueness with one measurement]\label{Main Thm 1 with one measure}
	Let $\Omega \subset \R^n$, $n\geq 1$ be a bounded domain with $C^{1,1}$ boundary, and let $W_1,W_2\Subset \Omega_e$ be arbitrarily open subsets. 
    Let $a_j(x,z)$ satisfy the condition \eqref{condition a} in $\Omega$, for $j=1,2$ and $0<s<1$. 
		Given any fixed function $f\in C^\infty_c(W_1)\setminus \{0\}$ such that $\norm{f}_{C^\infty_c(W_1)}<\delta$ for some sufficiently small number $\delta>0$, then $	\left. \Lambda_{a_1}(f)\right|_{W_2}=\left. \Lambda_{a_2}(f)\right|_{W_2}$ implies that 
	\begin{align*}
	a_1(x,u(x))=a_2(x,u(x)),\qquad \text{ for }x\in \Omega,
	\end{align*}
 where $u(x):=u_1(x)=u_2(x)$ and $u_j$ is the solution of $(-\Delta)^s u_j + a_j (x,u_j)=0$ in $\Omega$ with $u_j=f$ in $\Omega_e$, for $j=1,2$.
\end{thm}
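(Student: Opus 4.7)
The plan is to exploit the strong unique continuation property of the fractional Laplacian (Proposition~\ref{Prop:strong uniqueness}) to leverage a single exterior measurement into full coincidence of the interior solutions, after which the two PDEs give the pointwise identification of the coefficients directly. Unlike the higher-order-linearization approach of Section~\ref{Sec 3} and Section~\ref{Sec 4}, no linearization or density result is needed here — the information carried by one nonlocal measurement is already enough to pin down the solution everywhere.

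First, I would fix the common exterior datum $f\in C^\infty_c(W_1)$ with $\|f\|_{C^\infty_c(W_1)}<\delta$, and invoke Theorem~\ref{Thm:well posedness} to obtain unique small solutions $u_1,u_2\in C^s(\R^n)\cap H^s(\R^n)$ of
\[
(-\Delta)^s u_j + a_j(x,u_j)=0 \text{ in } \Omega, \qquad u_j=f \text{ in } \Omega_e.
\]
Set $w:=u_1-u_2\in H^s(\R^n)$. Since both solutions coincide with $f$ in $\Omega_e\supset W_2$, we have $w\equiv 0$ in $W_2$. On the other hand, the hypothesis $\Lambda_{a_1}(f)|_{W_2}=\Lambda_{a_2}(f)|_{W_2}$ together with Proposition~\ref{prop:DNmap} yields $(-\Delta)^s u_1 = (-\Delta)^s u_2$ in $W_2$, hence $(-\Delta)^s w = 0$ in $W_2$. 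Thus $w$ and $(-\Delta)^s w$ both vanish on the nonempty open set $W_2$, and Proposition~\ref{Prop:strong uniqueness} forces $w\equiv 0$ in $\R^n$. Therefore
\[
u(x):=u_1(x)=u_2(x), \qquad x\in\R^n.
\]

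Second, with $u_1=u_2=u$ now established, subtract the two governing equations in $\Omega$: the fractional Laplacian terms cancel identically, and what remains is
\[
a_1(x,u(x)) - a_2(x,u(x)) = 0, \qquad x\in\Omega,
\]
which is the claimed identity. The argument uses only the assumption $f\not\equiv 0$ to guarantee a nontrivial solution $u$; the stronger conditions on $a_j$ from \eqref{condition a} (holomorphy, sign of $\partial_z a(x,0)$) enter only through Theorem~\ref{Thm:well posedness} to guarantee the well-defined map $f\mapsto u_j$.

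There is no real obstacle here; the critical ingredient is the strong uniqueness of Proposition~\ref{Prop:strong uniqueness}, which is precisely the feature distinguishing the nonlocal setting from the classical local case. The only subtlety worth flagging is that the conclusion is necessarily weaker than that of Theorem~\ref{Main Thm 1}: one recovers $a_1$ and $a_2$ only along the graph of the single solution $u$, not on all of $\Omega\times\R$. This is consistent with the comparison discussed after \eqref{a single data}, and explains why infinitely many measurements (combined with higher order linearization) were required in Section~\ref{Sec 3} to obtain full coefficient identification.
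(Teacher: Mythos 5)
Your proposal is correct and follows essentially the same route as the paper's proof: the DN map equality plus the common exterior datum give $u_1-u_2=(-\Delta)^s(u_1-u_2)=0$ in $W_2$, Proposition~\ref{Prop:strong uniqueness} then forces $u_1\equiv u_2$ in $\R^n$, and subtracting the two equations yields the identity of the coefficients along the graph of $u$. Your closing remarks on the role of $f\not\equiv 0$ and the weaker nature of the conclusion compared with Theorem~\ref{Main Thm 1} are also consistent with the discussion surrounding \eqref{a single data}.
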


\begin{proof}
	The proof is based on the strong uniqueness and the semilinear elliptic equation. Specifically, by using the condition $\left. \Lambda_{a_1}(f)\right|_{W_2}=\left. \Lambda_{a_2}(f)\right|_{W_2}$, we have that $(-\Delta)^s u_1 =(-\Delta)^s u_2$ in $W_2$. Moreover, from the boundary $u_1=u_2=f$ in $\Omega_e$, we further obtain 
	\[
	u_1-u_2=(-\Delta)^s (u_1 -u_2)=0\qquad \text{ in }W_2 \Subset \Omega_e.
	\]
	Thus, the strong uniqueness (Proposition \ref{Prop:strong uniqueness}) implies that 
	\begin{align}\label{unique solution with one measure}
		u_1=u_2 \qquad \text{ in }\R^n.
	\end{align} 
	Next by using \eqref{unique solution with one measure} and the fractional semilinear elliptic equations $(-\Delta)^s u + a_1 (x,u)=(-\Delta)^s u + a_2 (x,u)=0$ in $\Omega$, one concludes $a_1(x,u(x))=a_2(x,u(x))$ in $\Omega$, which completes the proof.
\end{proof}

Finally, let us prove the simultaneous reconstruction of the fractional inverse obstacle problem with a single measurement.

\begin{thm}[Simultaneous recovery with one measurement]\label{Thm:one data obstacle}
	Let $\Omega \subset \R^n$ be a bounded domain with connected $C^{1,1}$ boundary $\p \Omega$, $n\geq 1$ and $0<s<1$. Let $D_1, D_2\Subset \Omega$ be nonempty open subsets with $C^{1,1}$ boundaries such that $\Omega \setminus \overline{D_j}$ are connected, and let $W_1,W_2\Subset \Omega_e$ be arbitrarily open subsets. For $j=1,2$, let $a_j(x,z)$ satisfy the condition \eqref{condition a} in $\Omega\setminus \overline{D_j}$, for $j=1,2$ and $0<s<1$. We denote by $\Lambda_{a_j}^{D_j}$ the DN maps of the following Dirichlet problems 
	\begin{align}\label{equ of obstacle one measure}
	\begin{cases}
	(-\Delta)^s u_j +a_j (x,u_j)=0 & \text{ in }\Omega \setminus \overline{D_j}, \\
	u_j =0 & \text{ in } D_j,\\
	u_j =f & \text{ in } \Omega_e,
	\end{cases}
	\end{align}
    with respect to the unique (small) solution for sufficiently small exterior data $f\in C^\infty_c(\Omega_e)$.
    Given any fixed function $f\in C^\infty_c(W_1)\setminus \{0\}$ such that $\norm{f}_{C^\infty_c(W_1)}<\delta$ for some sufficiently small number $\delta>0$, then $	\left.\Lambda_{a_1}^{D_1}(f)\right|_{W_2}= \left.\Lambda_{a_2}^{D_2}(f) \right|_{W_2} $ implies that 
	\begin{align*}
	D:=D_1 = D_2 \quad \text{ and } \quad a_1(x,u(x))=a_2(x,u(x)) \qquad \text{ for }x\in \Omega\setminus \overline{D},
	\end{align*}
	where $u=u_1=u_2$ in $\Omega \setminus \overline{D}$.
\end{thm}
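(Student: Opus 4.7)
My plan is to follow the same two-step strategy as in Theorem~\ref{Thm: Nonlinear nonlocal Schiffer's problem}, but exploiting the stronger consequence of the strong uniqueness result enjoyed in the single-measurement setting (as exhibited in the proof of Theorem~\ref{Main Thm 1 with one measure}). The key simplification is that with one measurement giving equal DN data, I can immediately force $u_1\equiv u_2$ globally in $\R^n$, which will then trivialize both the obstacle determination and the coefficient determination, without ever invoking linearization, density, or Runge approximation.

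First, I would use the hypothesis $\Lambda_{a_1}^{D_1}(f)|_{W_2}=\Lambda_{a_2}^{D_2}(f)|_{W_2}$ together with $u_1=u_2=f$ in $\Omega_e$ to deduce that $u_1-u_2$ and $(-\Delta)^s(u_1-u_2)$ both vanish on $W_2\Subset\Omega_e$. By Proposition~\ref{Prop:strong uniqueness}, this gives $u_1\equiv u_2$ in $\R^n$; call this common solution $u$.

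Second, I would recover the obstacle by contradiction. Suppose $D_1\neq D_2$; without loss of generality assume there is a nonempty open set $M\Subset D_2\setminus\overline{D_1}$. On $M$ we have $u=u_2=0$ (since $M\subset D_2$), while $M\subset\Omega\setminus\overline{D_1}$ forces $u=u_1$ to satisfy $(-\Delta)^s u + a_1(x,u)=0$ there. Using $u=0$ on $M$ and the condition $a_1(x,0)=0$ from \eqref{condition a}, I get $(-\Delta)^s u = 0$ on $M$. Since also $u=0$ on $M$, Proposition~\ref{Prop:strong uniqueness} yields $u\equiv 0$ in $\R^n$, contradicting $u|_{\Omega_e}=f\not\equiv 0$. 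By symmetry $D_2\setminus\overline{D_1}=\emptyset$ and $D_1\setminus\overline{D_2}=\emptyset$, so $D:=D_1=D_2$.

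Third, with $D_1=D_2=D$ and $u_1=u_2=u$ in $\R^n$ established, I subtract the two equations~\eqref{equ of obstacle one measure}: since the fractional-Laplacian terms and all boundary conditions coincide, I obtain
\begin{equation*}
   a_1(x,u(x))=a_2(x,u(x))\qquad\text{in }\Omega\setminus\overline{D},
\end{equation*}
which is the desired conclusion. I do not foresee a genuine obstacle in this proof: every step reduces to an application of Proposition~\ref{Prop:strong uniqueness} followed by an algebraic manipulation. The only point requiring a little care is ensuring that $a_1(x,0)=0$ (guaranteed by~\eqref{condition a}) is used correctly in the contradiction step, so that the vanishing of $u$ on $M$ really forces $(-\Delta)^s u=0$ there.
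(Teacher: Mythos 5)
Your proposal is correct and follows essentially the same route as the paper's own proof: establish $u_1\equiv u_2$ in $\R^n$ via Proposition~\ref{Prop:strong uniqueness}, recover the obstacle by the contradiction argument on $M\Subset D_2\setminus\overline{D_1}$ using $a_1(x,0)=0$, and then read off $a_1(x,u)=a_2(x,u)$ by subtracting the two equations. No gaps; the argument matches the paper step for step.
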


\begin{proof}
	Let us follow the two steps in the proof of Theorem \ref{Thm: Nonlinear nonlocal Schiffer's problem}.
	
	\vspace{10pt}
	
	$\bullet$ {\it $1^{st}$ step: Recovering the obstacle by one measurement.}\\
	
	\noindent Via the DN map condition $\Lambda_{a_1}^{D_1}(f)=\Lambda_{a_2}^{D_2}(f)$ in $W_2\subset \Omega_e$, it yields that $(-\Delta)^s u_1 =(-\Delta)^s u_2$ in $W_2$. Moreover, $u_1=u_2=f$ in $\Omega_e$, then we have 
	\[
	u_1-u_2=(-\Delta)^s (u_1-u_2)=0 \text{ in } W_2.
	\]
	The strong uniqueness (Proposition \ref{Prop:strong uniqueness}) implies that 
	\begin{align}\label{unique solution with 1 measure in obstacle}
		u_1=u_2 \quad \text{ in } \R^n.
	\end{align}
	
	Equipped with $u_1=u_2$ in $\R^n$, we are ready to show $D_1=D_2$, and we only need one nonzero $f\in C^\infty_c(W_2)$ to reconstruct the unknown obstacle.
	To this end, we apply a contradiction argument by assuming that $D_1 \neq D_2$.
	Without	loss of generality, let us assume $f\neq 0$ and there exists a nonempty open subset $M\Subset D_{2}\backslash\overline{D_{1}}$.	
	By using $u_2=0$ in $D_2$ and the result showed above $u_1=u_2$ in $\R^{n}$, we get
	that 
	\begin{align}\label{thm2.1,v1 single}
	u_1= u_2=0\qquad \hbox{ in }M\Subset D_{2}. 
	\end{align} 
   By using the fractional semilinear elliptic equation \eqref{equ of obstacle one measure}, one has 
	\begin{align}\label{thm2.1,v2 single}
	(-\Delta)^s u_1=-a_1(x,u_1)=-a_1(x,0)=0 \qquad \hbox{ in }M \Subset \Omega\backslash\overline{D_{1}},
	\end{align}
	where we have utilized the condition $a_1(x,0)=0$ in $\Omega$.
	With \eqref{thm2.1,v1 single} and \eqref{thm2.1,v2 single}, the strong uniqueness property implies that $u_1\equiv0$
	in $\mathbb{R}^{n}$, which contradicts to the assumptions that $u_1=f\neq 0$ in $\Omega_e$.
	Therefore, we obtain the uniqueness of the obstacle, namely,
	\begin{align*}
	D:=D_1=D_2 \Subset\Omega.
	\end{align*}

	\vspace{10pt}

	$\bullet$ {\it $2^{nd}$ step: Recovering the coefficient by one measurement.}\\
	
	As in the proof of Theorem \ref{Main Thm 1 with one measure}, by using \eqref{unique solution with 1 measure in obstacle} and the equation \eqref{equ of obstacle one measure} in the situation $D=D_1=D_2$, it can be easily seen that $(-\Delta)^s u_1 +a_1(x,u_1)=(-\Delta)^s u_2 +a_2(x,u_2)=0 $ in $\Omega\setminus \overline{D}$. This concludes that 
	\[
	a_1(x,u(x))=a_2(x,u(x)) \qquad \text{ for }x\in \Omega \setminus \overline{D},
	\]
	as desired.	
\end{proof}

\begin{rmk}
	In conclusion, in this section, we present substantially shorter and simpler arguments to prove the global uniqueness, namely, 
	$$
		a_1(x,u(x))=a_2(x,u(x)) \qquad \text{ for }x\in \Omega,
	$$
	and to show simultaneous recovery for fractional inverse obstacle problems. 	
	It is worthy noting that the same scenario does not appear in their local counterpart ($s=1$), see for example, \cite{LLLS2019partial}. The two main reasons are as follows.
	\begin{itemize}
		\item[(a)]  The reconstruction of the unknown obstacle in \cite{LLLS2019partial} relies on using the first order linearization that turns the nonlinear equation into a simpler linear one and, as a result, \emph{infinitely many} measurements are needed to make such linearization work. 
		
		\item[(b)] The fractional Laplacian has the strong uniqueness principle. This special feature makes the recovery of the solution $u$ without even knowing the coefficients possible, and therefore it largely simplifies the whole argument.
	\end{itemize}
\end{rmk}

\bigskip

\noindent\textbf{Acknowledgment.}
R.-Y. Lai is partially supported by the NSF grant DMS-1714490. Y.-H. Lin is partially  supported by the Ministry of Science and Technology Taiwan, under the Columbus Program: MOST-109-2636-M-009-006.

\bibliographystyle{alpha}
\bibliography{NSchrodingerRef}

\end{document}